\newtheorem{theorem}{Theorem}[section]
\newtheorem{lemma}{Lemma}[section]
\theoremstyle{remark}
\newtheorem{remark}{Remark}[section]
\newcommand{\e}{^\varepsilon}
\newcommand{\eps}{{\varepsilon}}
\newcommand{\ds}{\displaystyle}
\newcommand{\I}{\mathcal{J}\e}
\newcommand{\<}{\langle}
\renewcommand{\>}{\rangle}
\renewcommand{\a}{\alpha}
\renewcommand{\b}{\beta}
\newcommand{\cupl}{\bigcup\limits}
\newcommand{\supp}{\mathrm{supp}}
\newcommand{\suml}{\sum\limits}
\newcommand{\intl}{\int\limits}
\newcommand{\liml}{\lim\limits}
\newcommand{\maxl}{\max\limits}
\renewcommand{\phi}{\varphi}
\newcommand{\x}{\textbf{x}}
\renewcommand{\d}{\hspace{1pt}\mathrm{d}}
\numberwithin{equation}{section}
\begin{document}

\begin{frontmatter}

\title{Spectrum of a singularly perturbed periodic thin waveguide}

\author[a1]{Giuseppe Cardone}
\ead{giuseppe.cardone@unisannio.it}
\address[a1]{Department of Engineering, University of Sannio, Corso Garibaldi 107, 82100 Benevento, Italy}
\cortext[cor1]{Corresponding author}

\author[a2]{Andrii Khrabustovskyi\corref{cor1}}
\ead{andrii.khrabustovskyi@kit.edu}
\address[a2]{Institute of Analysis, Karlsruhe Institute of Technology, Englerstr. 2, 76131 Karlsruhe, Germany}

\journal{}

\begin{abstract}
We consider a family $\{\Omega^\varepsilon\}_{\varepsilon>0}$ of periodic domains in $\mathbb{R}^2$ with waveguide geometry and analyse  spectral properties of the Neumann Laplacian $-\Delta_{\Omega^\varepsilon}$ on $\Omega^\varepsilon$.
The waveguide $\Omega^\varepsilon$ is a union of a thin straight strip of the width $\varepsilon$ and a family of small protuberances with the so-called ``room-and-passage'' geometry. The protuberances are attached periodically, with a period $\varepsilon$, along the strip upper boundary. For $\varepsilon\to 0$ we prove a (kind of) resolvent convergence of $-\Delta_{\Omega^\varepsilon}$ to a certain ordinary differential operator. Also we demonstrate Hausdorff convergence of the spectrum.
In particular, we conclude that if the sizes of ``passages'' are appropriately scaled the first spectral gap of $-\Delta_{\Omega^\varepsilon}$ is determined exclusively by geometric properties of the protuberances. The proofs are carried out using methods of homogenization theory.
\end{abstract}

\begin{keyword}
singularly perturbed domains\sep periodic waveguides \sep Neumann Laplacian\sep spectral gaps \sep homogenization
\end{keyword}
 
\end{frontmatter}

\section{Introduction}

In the paper we study the limiting behaviour as $\eps\to 0$ of the
Neumann Laplacian on a thin periodic domain $\Omega\e\subset\mathbb{R}^2$ with waveguide geometry -- see Figure \ref{fig1}. 
The domain $\Omega\e$ is obtained from the straight unbounded strip $\Pi\e= \mathbb{R}\times (0,\eps)$ by attaching
an array of small identical protuberances (counted by the parameter $j\in \mathbb{Z}$). Each protuberance consists of 
two parts (below $\simeq$ means that domains coincide up to a translation):  
\begin{itemize}
\item \textit{the ``room''} $B_j\e\simeq\eps B$, where $B\subset\mathbb{R}^2$ is a fixed domain,

\item  \textit{the ``passage''} $T_j\e\simeq (0,d\e)\times [0,h\e]$ connecting the ``room'' $B_j\e$ with the strip $\Pi\e$. Here $h\e\to 0$, $d\e=o(\eps)$ as $\eps\to 0$.
\end{itemize}
The protuberances $T_j\e\cup B_j\e$, $j\in\mathbb{Z}$ are attached periodically, with a period  $\eps$, along the strip upper boundary. 

\begin{figure}[h]
\begin{center}
\begin{picture}(320,80)
\scalebox{0.4}{\includegraphics{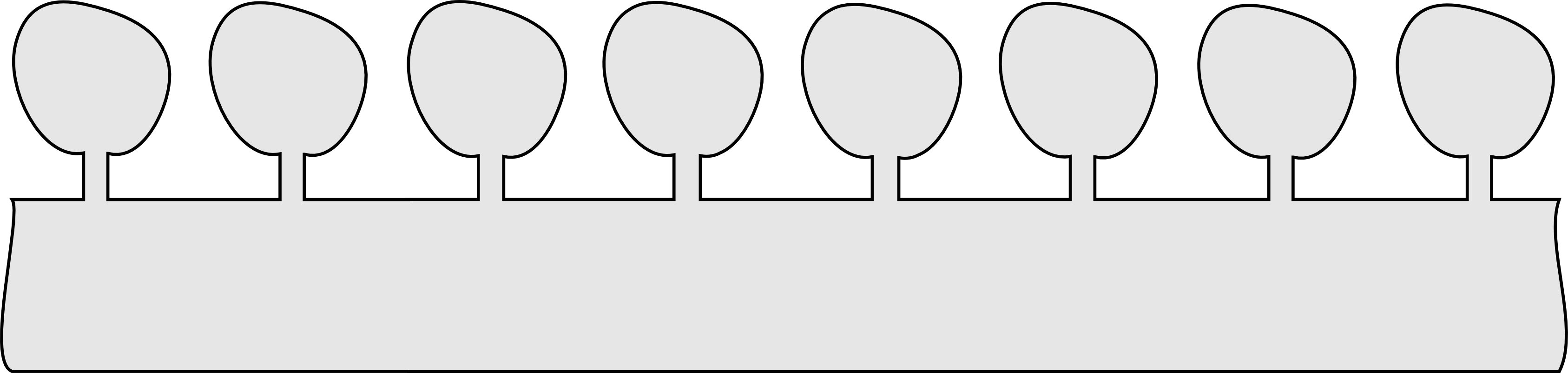}}
\put(-291,44){$T_j\e$}\put(-292,44){\vector(-4,-1){13}}
\put(-315,59){$B_j\e$}
\put(-300,15){$\Pi\e$}

\put(-8,21){\vector(0,1){14}}
\put(-8,21){\vector(0,-1){20}}
\put(-6,20){$_\eps$}

\put(1,59){\vector(0,1){18}}
\put(1,59){\vector(0,-1){15}}
\put(2,57){$_{\mathcal{O}(\eps)}$}

\put(-34,29){\vector(1,0){16}}
\put(-34,29){\vector(-1,0){26}}
\put(-41,32){$_\eps$}

\put(-8,34){\vector(0,1){10}}
\put(-8,40){\vector(0,-1){6}} 
\put(-6,40){$_{h\e}$}

\put(-29,39){\vector(1,0){8}}
\put(-13,39){\vector(-1,0){3}}
\put(-35,40){$_{d\e}$}

\end{picture}\caption{Domain $\Omega\e$}\label{fig1}
\end{center}
\end{figure}

Peculiar properties of Neumann spectral problems on domains 
perturbed by attaching small ``room-and-passage'' protuberances
were observed for the first time by R.~Courant and D.~Hilbert   \citep[Volume~I, Chapter~VI, \S~2.6]{CH53}. Below we sketch the example considered in \citep{CH53}.
Let us perturb a bounded connected domain $\Omega$ 
to a domain $\Omega\e$ by attaching a single ``room-and-passage'' protuberance.
The domain $\Omega\e$ differs from $\Omega$ only in a ball of the radius tending to zero as $\eps\to 0$. One can easily show (using, e.g., \cite[Theorem~1.5]{RT75}) that for each $k\in\mathbb{N}$ the $k$-th eigenvalue of \textit{the Dirichlet Laplacian} on $\Omega\e$ converges to the $k$-th eigenvalue of the Dirichlet Laplacian on $\Omega$. In contrast, for the Neumann Laplacians (we denote them $-\Delta_{\Omega\e}$ and $-\Delta_{\Omega}$) the continuity of eigenvalues does not hold in general: the first eigenvalues $\lambda_1$ and $\lambda_1\e$ of  $-\Delta_{\Omega }$ and $-\Delta_{\Omega\e}$ are zero, the second eigenvalue $\lambda_2$ of $-\Delta_{\Omega}$ is strictly positive, while the second eigenvalue $\lambda_2\e$ of $-\Delta_{\Omega\e}$ tends to zero as $\eps\to 0$ provided $h\e=\eps$, $d\e=\eps^\a$, $\a>3$.  

Later the aforementioned example was studied in \citep{AHH} for more general geometry of ``rooms'' and ``passages''. The authors also inspected the case of 
\textit{finitely} many attached ``room-and-passage'' domains proving that  
$\liml_{\eps\to 0}\lambda_k\e=0$ as $k=2,\dots,r+1$ and $\liml_{\eps\to 0}\lambda_k\e=\lambda_{k-r}$ as $k\geq r+2$, where $r\in\mathbb{N}$ is the number of attached domains.

The case, when  the number of attached ``room-and-passage''  protuberances tends to infinity as $\eps\to 0$, was studied in our previous paper \citep{CK15} (still with a bounded $\Omega$). We considered the operator
\begin{gather*}
\mathcal{H}\e=-(\rho\e)^{-1}\Delta_{\Omega\e},
\end{gather*}
where $-\Delta_{\Omega\e}$ is the Neumann Laplacian on $\Omega\e$, the weight $\rho\e$ is equal to $1$ everywhere except  the union of the ``rooms'', where it is equal to the constant $\varrho\e>0$ satisfying $\liml_{\eps\to 0}{\varrho\e \eps}<\infty$.
It was proved that the spectrum $\sigma(\mathcal{H}\e)$ of the operator $\mathcal{H}\e$ converges in the Hausdorff sense as $\varepsilon\to 0$   to the set $\sigma(\mathcal{H})\cup\{q\}$, where
$q=\liml_{\eps\to 0}{d\e\over h\e\varrho\e \eps^2 |B|}\in [0,\infty]$  (by $|\cdot|$ we denote the Lebesgue measure of a domain) 
and $\mathcal{H}$ is the operator associated with
the following spectral problem:
\begin{gather*}
 \ds -\Delta u=\lambda u \text{ in }\Omega,\quad 
{\partial u\over\partial  n}=\mathcal{V}(\lambda) u\text{ on }\Gamma,\quad
{\partial u\over\partial  n}=0 \text{ on }\partial\Omega\setminus\Gamma,
\end{gather*}
where $\Gamma$ is  the perturbed part of $\partial\Omega$, $n$ is the outward-pointing unit normal to $\partial\Omega$.
If $\liml_{\eps\to 0}{\varrho\e \eps}= 0$ one has $\mathcal{V}(\lambda)\equiv 0$, otherwise $\mathcal{V}(\lambda)$ is either linear function ($q=\infty$)  or rational 
 function ($q<\infty$) with a pole at $q$, which is also a point of accumulation of eigenvalues of $\mathcal{H}$. 
 
Note, that in the case $\varrho\e=1$ (i.e., $\mathcal{H}\e$ is simply the Neumann Laplacian on $\Omega\e$) one has $\mathcal{V}(\lambda)\equiv 0$, i.e. $\mathcal{H}$ is the Neumann Laplacian on $\Omega$.
The case $\varrho\e=1$, $q=0$ was also studied  in \citep[Chapter XII]{SP80}.

The results of \citep{CK15} were extended  in \citep{CK16}\footnote{In fact, in \citep{CK16} we deal with the most interesting case $q>0$, $\liml_{\eps\to 0}{\varrho\e \eps}>0$ only. The analysis for the rest cases can be carried out a similar way.} to $\Omega$, which is an unbounded straight strip of the \textit{fixed} width $L>0$. In this case $\Gamma$ is its upper (or lower) boundary. It turns out that the form of the limit problem remains the same as in the case of a bounded domain, but the structure of its spectrum is essentially different: it is either the whole positive semi-axis or the set $[0,\infty)\setminus (q,\widehat q)$ (this case occur if $\liml_{\eps\to 0}{\varrho\e \eps}>0$ and $0<q<\left(\pi\over 2L\right)^2$). The number $\widehat{q}\in (q,\infty)$ is a solutions to some transcendental equation involving $q$ and $L$. In the last instance we are able to make the following useful conclusion: the spectrum of $\mathcal{H}\e$ has a gap provided $\eps$ is small enough, the edges of this gap converge to $q$ and $\widehat q$. 
\medskip

In the current paper we study the asymptotic behaviour of the ``pure'' Neumann Laplacian (i.e., $\varrho\e=1$), but now, in contrast to \citep{CK16}, the ``basic'' strip $\Omega=\Pi\e$ also depends on $\eps$. Since $\Omega\e$ shrinks to $\mathbb{R}$ as $\eps\to 0$ it is natural to expect that $-\Delta_{\Omega\e}$  converges (in suitable sense) to some ordinary differential operator on the line.
It turns out that the form of this operator depends on  
$q=\liml_{\eps\to 0}{d\e\over h\e\eps^2|B|}\in [0,\infty]$.  

Boundary value and spectral problems  on \textit{thin domains with oscillating boundary} were studied in a lot works -- see, e.g., \cite{AP11,HR92,PS13,PS15} and references therein. In these papers the authors deal with thin domains, whose boundary (or its part) has the form of a graphic of some smooth 
oscillating function (for example, $\Omega\e=\left\{\mathbf{x}\in\mathbb{R}^2:\   -\eps<x_2< \eps \varphi\e(x_1)\right\},$
where $\eps>0$ is again a small parameter, $\varphi\e(x)=\varphi(x,x\eps^{-\a})$, $\a>0$, $\varphi(x,y)$ is a smooth positive function, periodic with respect to $y$).
More general geometries were treated in \cite{AP13}, where thin strip is perturbed 
by attaching small protuberances, $\eps^\a$-periodically along it boundary;
the protuberances are obtained from a fixed bounded domain by $\eps^\a$-rescaling in $x_1$ direction and $\eps$-rescaling in $x_2$ direction.
In \cite{MP10,MP12}, besides an oscillating external boundary, additional internal holes  are allowed. 
\medskip

At first, we study the resolvent equation
\begin{gather}\label{resolvent-eq}
-\Delta_{\Omega\e}u\e+\mu u\e=f\e,\ \mu>0.
\end{gather}
We prove (see Theorems \ref{th1}-\ref{th1+})  that under some natural assumptions on
$f\e$ the solution $u\e$ to the problem \eqref{resolvent-eq} converges in a suitable sense to the solution
of the following problem on the line:
\begin{gather} \label{resolvent}
-u''(x)+\mathcal{V}(\mu)u(x)=\mathcal{F}(\mu,x).
\end{gather}
The functions 
$\mathcal{V}(\mu)$, $\mathcal{F}(\mu,x)$  are either linear ($q=\infty$)  or rational ($q<\infty$)
 functions  of $\mu$. In the later case they both have one pole -- at the point $q$.

Problem \eqref{resolvent} can be associated with a resolvent equation
for some self-adjoint operator $\mathcal{H}$ acting in $[L_2(\mathbb{R})]^2$. The spectrum of this operator has the form
\begin{gather}\label{sigmaH}
\sigma(\mathcal{H})=[0,\infty)\setminus \left(q,q+q|B|\right)
\end{gather}
provided $q>0$, otherwise $\sigma(\mathcal{H})=[0,\infty)$.

Our second result concerns the spectral convergence in the most interesting case $q<\infty$. 
Periodicity of $\Omega\e$ leads to the band structure of $\sigma(-\Delta_{\Omega\e})$, 
i.e. $\sigma(-\Delta_{\Omega\e})$  is a locally finite union of compact intervals called \textit{bands}. In general they may overlap, otherwise we have a \textit{gap} in the spectrum -- a  bounded open interval having an empty intersection with the spectrum  but with ends belonging to it.

We prove (see Theorem \ref{th2}) that the spectrum of $-\Delta_{\Omega\e}$ converges as $\eps\to 0$ to the spectrum of $\mathcal{H}$ in the Hausdorff sense. This means that 
$\sigma(-\Delta_{\Omega\e})$ has a gap provided $\eps$ is small enough;
when $\eps\to 0$ this gap converges to the interval $(q,q+q|B|)$.
Moreover, we show (see Lemma \ref{lm1}) that other gaps (if any) ``escape'' from any finite interval when $\eps\to 0$.

Theorems \ref{th1}-\ref{th2} remain valid if $\Omega$ is a bounded strip, cf. Remark \ref{rm-bounded} below.  \medskip

Note, that using the same ideas one can also construct a waveguide with several gaps. Namely, if we attach to $\Pi\e$\quad $m\in\mathbb{N}$ 
different families of ``room-and-passage'' domains we will arrive at the same 
limit problem \eqref{resolvent}, but the functions $\mathcal{V}(\mu)$, $\mathcal{F}(\mu,x)$ will have $m$ poles. The corresponding operator $\mathcal{H}$ will act in $  [L_2(\mathbb{R})]^{m+1}$ and have $m$ gaps.
The proof relies on the same methods as in the case $m=1$, but is more cumbersome. 
\medskip

The possibility to open up gaps in the spectrum of periodic differential operators is important from the point of view of applications, in particular, to the so-called photonic crystals --  periodic nanostructures that have been attracting much attention in recent years. The characteristic property of photonic crystals is that the light waves at certain optical frequencies fail to propagate in them, which is caused by gaps in the spectrum of the Maxwell operator or related scalar operators. 
Pioneer mathematical results justifying the opening of spectral gaps for some $2D$ dielectric media were obtained in \cite{FK96}. We refer to the overview \cite{K01} and the book \citep{DLPSW11} concerning mathematical problems arising in this field.

As we already mentioned, in general, the presence of gaps is not guaranteed -- for instance, if $\Omega$ is a  straight unbounded strip then  the spectrum of the Laplace operator is a ray $[\Lambda,\infty)$, where  $\Lambda=0$ for the Neumann Laplacian  and $\Lambda>0$ for the Dirichlet Laplacian. There exist several approaches how to construct a periodic waveguide-like domain  with non-void gaps in the spectrum of the Laplace operator on this domain subject to Neumann or Dirichlet boundary conditions. 
The simplest way is to consider the waveguide consisting of an array of identical compact domains connected by thin passages or windows  -- see, e.g.,  \citep{BN15,B15,P10}. In this case the spectrum typically has small bands 
separated by relatively large gaps.  
The opposite picture (i.e., large bands versus small gaps) occurs under ``small'' perturbations of straight waveguides (see \citep{BNR13,CNP10,N10_1,N12,N15}) --- either by a periodic nucleation of small  holes or by a gentle periodic bending of the boundary. 
The waveguide $\Omega\e$ constructed in the current paper falls into an intermediate case -- the length of the first band is comparable with the length of the first gap. Moreover, in contrast to \citep{CK16}, both edges of this gap depends on geometric properties of the waveguide in a very simple fashion.

Thin periodic waveguides of constant width were treated in  \cite{Y98}. It was proved that
the Dirichlet Laplacian on such a waveguide always has at least one gap provided the signed  curvature of  the  boundary  curve  is smooth and non-constant and the waveguide is thin enough. The opening of spectral gaps for the Dirichlet Laplacian on the waveguide of the form $\left\{(x_1,x_2)\in\mathbb{R}^2:\ 0<x_2<\eps h(x_1)\right\}$, where $h(x)$ is a positive periodic function, was established in \cite{FS08} under a suitable assumptions on $h$.
The waveguide consisting of two parallel strips coupled through a period family of thin windows 
was studied in \cite{BP13_1}.
Finally, we mention the papers \citep{BP13_2,CMN09,KP16,NRT10,N10_2} where 
the same type problems were addressed for more general elliptic selfadjoint operators, and
\cite{BRT15}, where the Steklov spectral problem was considered.\medskip

The paper is organized as follows. In Section \ref{sec2} we set up the problem and formulate the main results. 
We prove  resolvent convergence of 
$-\Delta_{\Omega\e}$ in Sections \ref{sec3} ($q<\infty$) and \ref{sec4} ($q=\infty$). In Section~\ref{sec5} we prove  Hausdorff convergence of the spectrum. Finally, in Section \ref{sec6} we show that $-\Delta_{\Omega\e}$ has at most one gap on finite intervals provided $\eps$ is small enough.

\section{Setting of the problem and main results \label{sec2}}

Let $\eps>0$ be a small parameter, and $d\e,h\e$ be positive numbers  depending on $\eps$ and satisfying
\begin{gather}
\label{ass_conc}
\liml_{\eps\to 0}{d\e\over \eps}=0,
\\\label{ass2}
\liml_{\eps\to 0}h\e = 0,
\\
\label{ass1}
\liml_{\eps\to 0}{\eps^2 \ln d\e} = 0.
\end{gather}
Condition \eqref{ass1} is rather technical, one needs it to have a better
control on the behaviour of functions from $H^1(\Omega\e)$ near 
the bottom and the top of the passages (see Lemma \ref{lm2} below).

Hereinafter by $\x=(x_1,x_2)$ we denote the points in $\mathbb{R}^2$,
by $x$ we denote the points in $\mathbb{R}$.
Further, we introduce the following sets (below $j\in\mathbb{Z}$):
\begin{itemize}
\item $\Pi\e=\left\{\x \in \mathbb{R}^2:\ -\eps< x_2< 0\right\}$,

\item $T_j\e=\left\{\x \in \mathbb{R}^2:\ |x_1-x_j\e|<\ds{d\e\over 2},\ 0\leq x_2\leq h\e\right\}$, where $x_j\e=\eps(j+1/2)$,

\item $B_j\e=\left\{\mathbf{x}\in \mathbb{R}^2:\ \mathbf{x}-\widetilde{\textbf{x}}_j\e\in \eps B\right\}$, where $ \widetilde{\textbf{x}}_j\e=(x_j\e,h\e)\in\mathbb{R}^2$, 
$B\subset\mathbb{R}^{2}$ is an open bounded domain with Lipschitz boundary such that
\begin{gather}\label{assB1}
B\subset \left\{\x \in\mathbb{R}^2:\ |x_1|<{1\over 2},\ 
x_2>0\right\},\\\label{assB2}
\exists R\in \left(0,{1\over 2}\right):\ \left\{\x \in\mathbb{R}^2:\ |x_1|<R,\ x_2=0\right\}\subset \partial B.
\end{gather}
\end{itemize}
By virtue of the condition \eqref{assB1}  the neighbouring ``rooms'' $B_j\e$ are pairwise disjoint.  
Condition \eqref{assB2} together with \eqref{ass_conc}  imply the correct gluing of the $j$-th ``room'' and the $j$-th ``passage'', namely the upper face of $T_j\e$ is contained in $\partial B_j\e$.

Finally, we define the waveguide $\Omega\e$ as a union of the straight strip $\Pi\e$ and $\eps$-periodically attached ``room-and-passage''  protuberances $B_j\e\cup T_j\e$ (see Figure \ref{fig1}):
$$\Omega\e=\Pi\e\cup\left(\cupl_{j\in\mathbb{Z}}B_j\e\cup T_j\e\right).$$

We denote by $\mathcal{H}\e$ the Neumann Laplacian in $L_2(\Omega\e)$ -- the
self-adjoint and positive operator associated with
the sesquilinear form $\mathfrak{h}\e$,
$$\mathfrak{h}\e [u,v]=\intl_{\Omega\e}\nabla u(\x)\cdot\overline{\nabla v(\x)}\d \mathbf{x},\quad \mathrm{dom}(\mathfrak{h}\e)=H^1(\Omega\e)$$
(i.e., $(\mathcal{H}\e u,v)_{L_2(\Omega\e)}=\mathfrak{h}\e [u,v]$, $\forall u\in \mathrm{dom}(\mathcal{H}\e)$, $\forall v\in \mathrm{dom}(\mathfrak{h}\e)$).
\smallskip

Our first goal is to describe the behaviour of the resolvent $(\mathcal{H}\e+\mu I)^{-1}$, $\mu>0$ as $\eps\to 0$
under the assumption that the following limit $q$, either finite or infinite, exists:
\begin{gather}
\label{ass3}
\liml_{\eps\to 0}{d\e\over h\e\eps^2|B|}=q\in [0,\infty].
\end{gather}
Note, that if $q<\infty$ then \eqref{ass_conc} follows automatically from \eqref{ass2}, \eqref{ass3}.\smallskip

Before to formulate the result we need to introduce auxiliary operators.

We define the operator $J_1\e: L_2(\Pi\e)\to L_2(\mathbb{R})$
by the formula
\begin{gather}\label{j1}
(J_1\e u)(x)={1\over\sqrt{\eps}} \int_{-\eps}^0 u(\x) \d x_2,\text{ where }\x=(x,x_2).
\end{gather}
Also we introduce the operator 
$J_2\e : L_2(\cupl_{j\in\mathbb{Z}} B_j\e)\to L_2(\mathbb{R})$ by
\begin{gather}\label{j2}
(J_2\e u)(x)=\suml_{j\in\mathbb{Z}}\left({1\over \sqrt{\eps|B_j\e|}}\intl_{B_j\e} u(\mathbf{x}) \d \mathbf{x}\right)\chi_j(x),
\end{gather}
where $\chi_j(x)$ is the indicator function of the interval $\left(x_j\e-{\eps\over 2},x_j\e+{\eps\over 2}\right)$.

Using the Cauchy-Schwarz inequality we get 
\begin{gather}\label{iso0}
\forall f\in L_2(\Omega\e):\quad \|J_1\e f\|_{L_2(\mathbb{R})}\leq \|f\|_{L_2(\Pi\e)},\quad 
\|J_2\e f\|_{L_2(\mathbb{R})}\leq \|f\|_{L_2(\cupl_{j\in\mathbb{Z}}B_j\e)}.
\end{gather}
Moreover, it is easy to show that $J\e_1 u\in H^1(\mathbb{R})$ provided $u\in H^1(\Omega\e)$
and the following Poincar\'{e}-type estimates are valid:
\begin{gather}\label{iso1}
\|u\|^2_{L_2(\Pi\e)}\leq \|J_1\e u\|^2_{L_2(\mathbb{R})}+ C\eps^2\|\nabla u\|^2_{L_2(\Pi\e)},\\\label{iso2}
\|u\|^2_{L_2(\cupl_j{B_j\e})}\leq\|J_2\e u\|^2_{L_2(\mathbb{R})}+ C\eps^2\|\nabla u\|^2_{L_2(\cupl_j B_j\e)}.
\end{gather}
Hereinafter by $C,C_1,C_2,\dots$ we denote generic constants which do not depend on $\eps$. 
Inequalities \eqref{iso0}-\eqref{iso2} mean that $J_1\e,J_2\e$ are ``almost'' isometries (as $\eps \ll  1$). \medskip

We are now in position to formulate the first result; it deals with the most interesting case $q<\infty$. Below, as usual,\, $\rightharpoonup$ denotes the weak convergence (in an appropriate space).

\begin{theorem}\label{th1}
Let $q<\infty$. Let $\left\{f\e\right\}_\eps$ be a family of functions from $ L_2(\Omega\e)$ satisfying
\begin{gather}\label{f}\|f\e\|_{L_2(\Omega\e)}\leq C,\quad 
J_1\e f\e\rightharpoonup f_1\text{ in }L_2(\mathbb{R}),\quad J_2\e f\e\rightharpoonup f_2\text{ in }L_2(\mathbb{R})\text{ as }\eps\to 0,
\end{gather}
where  $f_1,f_2\in L_2(\mathbb{R})$. 
We set $u\e=(\mathcal{H}\e+\mu I)^{-1} f\e$, $\mu>0$.

Then
$$J_1\e u\e\rightharpoonup u_1\text{ in }H^1(\mathbb{R})\text{ as }\eps\to 0,$$
where the function $u_1$ belongs to $H^2(\mathbb{R})$ and is a solution of the problem
\begin{gather}\label{res_probl}
-u_1''+\mu\left(1+{q |B| \over q+\mu }\right)u_1=
f_1+{q|B|^{1/2} \over q+\mu  }f_2.
\end{gather}
Moreover 
$$J_2\e u\e\rightharpoonup u_2={q |B|^{1/2}  \over q +\mu  }u_1+ 
{1  \over q +\mu  }f_2\ \text{ in }L_2(\mathbb{R}).$$
\medskip
\end{theorem}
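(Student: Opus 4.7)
\textbf{Proof proposal for Theorem \ref{th1}.}

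The plan is a standard ``test-function + compactness'' homogenization argument adapted to the three pieces of $\Omega\e$: strip, rooms, passages. First I would test the resolvent equation with $u\e$ itself:
\begin{equation*}
\|\nabla u\e\|_{L_2(\Omega\e)}^2 + \mu\|u\e\|_{L_2(\Omega\e)}^2 = (f\e,u\e)_{L_2(\Omega\e)} \le C,
\end{equation*}
which, combined with the Poincar\'{e}--type inequalities \eqref{iso1}--\eqref{iso2} and \eqref{iso0}, yields uniform bounds for $J_1\e u\e$ in $H^1(\mathbb{R})$ and for $J_2\e u\e$ in $L_2(\mathbb{R})$. Passing to a subsequence, let $J_1\e u\e \rightharpoonup u_1$ in $H^1(\mathbb{R})$ and $J_2\e u\e \rightharpoonup u_2$ in $L_2(\mathbb{R})$. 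The whole sequence will converge provided $(u_1,u_2)$ is uniquely determined.

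Next I would construct test functions $v\e \in H^1(\Omega\e)$ for arbitrary $v_1,v_2\in C_c^\infty(\mathbb{R})$ by
\begin{equation*}
v\e(\x)=\begin{cases}
\eps^{-1/2}\,v_1(x_1), & \x\in\Pi\e,\\
\eps^{-1/2}|B|^{-1/2}\,v_2(x_j\e), & \x\in B_j\e,\\
\text{linear interpolation in }x_2, & \x\in T_j\e,
\end{cases}
\end{equation*}
and plug $v\e$ into the weak formulation $\mathfrak{h}\e[u\e,v\e]+\mu(u\e,v\e)=(f\e,v\e)$. The scaling is chosen so that $J_1\e v\e\to v_1$ strongly in $H^1$ and $J_2\e v\e\to v_2$ strongly in $L_2$. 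The contributions from $\Pi\e$ and from $\cup_j B_j\e$ pass to the limit routinely and yield $\int(u_1'v_1'+\mu u_1 v_1)+\mu\int u_2 v_2$, while on the right-hand side the assumption \eqref{f} gives $\int f_1 v_1+\int f_2 v_2$.

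The key computation is the passage contribution. On $T_j\e$ one has $\partial_2 v\e\approx (\eps h\e)^{-1/2}\cdot\eps^{-1/2}(v_2(x_j\e)|B|^{-1/2}-v_1(x_j\e))/h\e$, and for the quadratic form
\begin{equation*}
\sum_j \int_{T_j\e}|\nabla v\e|^2 \approx \frac{d\e}{\eps h\e}\sum_j \bigl(v_2(x_j\e)|B|^{-1/2}-v_1(x_j\e)\bigr)^2 \longrightarrow q|B|\!\int_\mathbb{R}\!\bigl(|B|^{-1/2}v_2-v_1\bigr)^2\!\d x
\end{equation*}
by the hypothesis \eqref{ass3} and a Riemann-sum argument. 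Polarising this quadratic form, varying $v_1,v_2$ independently and eliminating $u_2$ via the equation $(\mu+q)u_2=q|B|^{1/2}u_1+f_2$, one recovers precisely \eqref{res_probl} (and the stated formula for $u_2$); elliptic regularity then gives $u_1\in H^2(\mathbb{R})$.

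The main obstacle is the crossed term $\int_{T_j\e}\nabla u\e\cdot\nabla v\e$ together with $\int_{T_j\e}\mu u\e v\e$, which must be rewritten in terms of the traces of $u\e$ on the top and bottom of each passage and then matched against $J_1\e u\e(x_j\e)$ and $J_2\e u\e(x_j\e)$. This requires two ingredients: first, showing that $u\e$ on $B_j\e$ is close to its mean (by a Poincar\'{e} inequality on the fixed domain $B$, after rescaling), so its trace on the upper face of $T_j\e$ is well-approximated by $\eps^{-1/2}|B|^{-1/2}(J_2\e u\e)(x_j\e)$; second, a trace/mean estimate near the bottom of $T_j\e$ identifying that trace with $\eps^{-1/2}(J_1\e u\e)(x_j\e)$. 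The latter is the delicate one, because it requires propagating a $H^1$-function through a tiny window of width $d\e$ on the upper boundary of the $\eps$-thin strip $\Pi\e$; this is precisely where the technical hypothesis \eqref{ass1} enters, via the logarithmic trace/capacity estimate announced as Lemma~\ref{lm2}. Once these two approximations are in force, the passage term converges to $q|B|\int(|B|^{-1/2}u_2-u_1)(|B|^{-1/2}v_2-v_1)$, and the limiting variational identity holds for all $v_1,v_2\in C_c^\infty(\mathbb{R})$, which is enough to identify $(u_1,u_2)$ uniquely and to finish the proof.
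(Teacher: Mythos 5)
Your proposal is correct and follows essentially the same route as the paper: a priori bounds plus weak compactness for $J_1\e u\e$, $J_2\e u\e$, a piecewise test function (slowly varying on the strip, constant on the rooms, linearly interpolated in $x_2$ on the passages), a Riemann-sum limit $\tfrac{d\e}{h\e\eps^2}\to q|B|$ for the passage term, and the logarithmic trace/mean estimate (the paper's Lemma \ref{lm2}, where \eqref{ass1} enters) to match the passage traces with $J_1\e u\e$ and $J_2\e u\e$. The only differences are cosmetic: the paper normalizes the room value of the test function as $\eps^{-1/2}w_2(x_j\e)$ rather than with your extra $|B|^{-1/2}$, and it flattens the strip part near each passage mouth with a cutoff $\varphi_j\e$ so that the passage part is exactly harmonic before integrating by parts.
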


\begin{remark}\label{rem_f}
The typical example of a family $\{f\e\}_\eps$ satisfying 
\eqref{f} is as follows. Let $F=(f_1,f_2)\in [L_2(\mathbb{R})]^2$ be an arbitrary function. We introduce the function $f\e$ by
\begin{gather}\label{J*}
f\e(\x)=
\begin{cases}
\ds{1\over \sqrt{\eps}}f_1(x_1),& \x\in\Pi\e,\\
0,& \x\in T_j\e,\\
\ds{1\over \sqrt{\eps|B_j\e|}}\intl_{-\eps/2+x_j\e}^{\eps/2+x_j\e} f_2(x)\d x,&\x\in B_j\e.
\end{cases}
\end{gather}
It is easy to show that $f\e $ satisfies \eqref{f}.
\end{remark}

Theorem \ref{th1} can be rewritten by assigning to the problem \eqref{res_probl} some self-adjoint and positive operator. Namely, we introduce the operator $\mathcal{H}$ acting in $[L_2(\mathbb{R})]^2$ by    
\begin{gather}\label{H}
\mathcal{H}U= 
 \begin{pmatrix}-\ds{\d ^2/ \d x^2}+q|B| & -q|B|^{1/2} \\[1mm] - q|B|^{1/2} & q   \end{pmatrix} 
 \begin{pmatrix}u_1\\[1mm]u_2\end{pmatrix} ,\ U=(u_1,u_2),\quad \mathrm{dom}(\mathcal{H})=H^2(\mathbb{R})\times L_2(\mathbb{R}).
\end{gather} 
It is straightforward to show that $u_1$ solves \eqref{res_probl} and 
$u_2={q |B|^{1/2}  \over q +\mu  }u_1+ 
{1  \over q +\mu  }f_2$
iff
$$\mathcal{H}U+\mu U  = F,\text{ where }U=(u_1,  u_2),\ F=(f_1,  f_2).$$
Thus Theorem \ref{th1} claims  
$$J\e(\mathcal{H}\e+\mu I )^{-1} f\e \rightharpoonup (\mathcal{H}+\mu I )^{-1}F\text{ as }\eps\to 0\text{\quad provided }J\e f\e\rightharpoonup F\text{ in }[L_2(\mathbb{R})]^2,$$
where $J\e=(J_1\e, J_2\e):H^1(\Omega\e)\times L_2(\Omega\e)\to H^1(\mathbb{R})\times L_2(\mathbb{R})$. \bigskip

In the case $q=\infty$ one has the following result.

\begin{theorem}\label{th1+}
Let $q=\infty$. Let $\left\{f\e\right\}_\eps$ be a family of functions from $\in L_2(\Omega\e)$ satisfying \eqref{f}. We set $u\e=(\mathcal{H}\e+\mu I)^{-1} f\e$, $\mu>0$.
Then
$$
J_1\e u\e\rightharpoonup u_1\text{ in }H^1(\mathbb{R}),\quad
J_2\e u\e\rightharpoonup u_2\text{ in }L_2(\mathbb{R})
\text{\quad as }\eps\to 0,$$
where $u_1\in H^2(\mathbb{R})$, $u_2={|B|^{1/2}}u_1$ and 
\begin{gather}\label{res_probl+}
-u_1''+\mu\left(1+{|B|}\right)u_1=f_1+{|B|^{1/2}}f_2.
\end{gather}
\end{theorem}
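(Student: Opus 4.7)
The plan is to follow the same scheme as in Theorem \ref{th1}, with simplifications arising from the stronger coupling regime $q=\infty$. First, testing the weak form of the resolvent equation against $u\e$ itself yields the uniform a priori bound $\|u\e\|_{H^1(\Omega\e)}\leq C$. Combined with the Poincar\'e-type inequalities \eqref{iso0}--\eqref{iso2}, this provides uniform bounds on $J_1\e u\e$ in $H^1(\mathbb{R})$ and on $J_2\e u\e$ in $L_2(\mathbb{R})$, so after passing to a subsequence, $J_1\e u\e\rightharpoonup u_1$ in $H^1(\mathbb{R})$ and $J_2\e u\e\rightharpoonup u_2$ in $L_2(\mathbb{R})$ for some $u_1,u_2$ to be identified.

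The decisive new ingredient for $q=\infty$ is the pointwise coupling $u_2=|B|^{1/2}u_1$. I will establish it by combining the trace control provided by Lemma \ref{lm2} (which links the traces of $u\e$ on the top and bottom of each passage $T_j\e$ to the integral averages over $B_j\e$ and over the corresponding $\eps$-cell of $\Pi\e$) with a Poincar\'e-type inequality on the thin rectangle $T_j\e$, whose constant is of order $\sqrt{h\e/d\e}$. Summing over $j$ yields an estimate of the form
$$
\|J_2\e u\e - |B|^{1/2} J_1\e u\e\|_{L_2(\mathbb{R})}^2
\leq C\,\frac{\eps^2 h\e}{d\e}\,\|\nabla u\e\|_{L_2(\Omega\e)}^2 + o(1),
$$
whose right-hand side vanishes precisely because $q=\infty$, i.e.\ $d\e/(h\e\eps^2)\to\infty$. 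The passage is then so conductive that the room and strip averages must agree in the limit, forcing $u_2=|B|^{1/2}u_1$; by contrast, for $q<\infty$ the same computation yields the $\mu$-dependent coupling of Theorem \ref{th1}.

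To obtain the limit equation I insert the test function $v\e(\x)=\varphi(x_1)/\sqrt{\eps}$ (for arbitrary $\varphi\in C_c^\infty(\mathbb{R})$) into the weak formulation $\mathfrak{h}\e[u\e,v\e]+\mu(u\e,v\e)_{L_2(\Omega\e)}=(f\e,v\e)_{L_2(\Omega\e)}$. Since $\partial_{x_2}v\e\equiv 0$, only the $\partial_{x_1}$-component enters the gradient term. The strip piece is immediate: the commutation $\partial_{x_1}J_1\e=J_1\e\partial_{x_1}$ gives $\intl_{\Pi\e}\partial_{x_1}u\e\cdot\varphi'/\sqrt{\eps}=\intl_{\mathbb{R}}\varphi'\,\partial_{x_1}(J_1\e u\e)\to \intl_{\mathbb{R}} u_1'\varphi'$ by the weak $H^1$-convergence. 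The $\mu$-term converges to $\mu(1+|B|)\intl_{\mathbb{R}} u_1\varphi$ using $u_2=|B|^{1/2}u_1$, and the right-hand side to $\intl_{\mathbb{R}}\varphi(f_1+|B|^{1/2}f_2)$. The contribution of the protuberances to the gradient term must vanish if the limit equation is to agree with \eqref{res_probl+}; for the rooms this is shown by integration by parts on each $B_j\e$, isolating the zero-mean part of $u\e$ through the coupling, and applying Poincar\'e and trace estimates on $B_j\e$, while for the passages a direct Cauchy--Schwarz estimate suffices using $d\e=o(\eps)$ together with the Poincar\'e control in $T_j\e$ already used in step two. Assembling all the contributions produces the weak form of \eqref{res_probl+}; elliptic regularity then yields $u_1\in H^2(\mathbb{R})$, and uniqueness of the limit problem promotes subsequential convergence to the full family. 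The main obstacle is precisely the quantitative handling of the rooms' gradient contribution, where the effective constancy of $u\e$ on each $B_j\e$ (forced by the strong coupling $q=\infty$) must be controlled uniformly after summing $\sim 1/\eps$ cells.
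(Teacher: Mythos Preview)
Your derivation of the coupling $u_2=|B|^{1/2}u_1$ is essentially the paper's argument: combine Lemma~\ref{lm2}, the passage estimate \eqref{s-c}, and Poincar\'e on $Y_j\e$, then use $\eps^2 h\e/d\e\to 0$. That part is fine and matches Section~\ref{sec4}.

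The gap is in your choice of test function for the limit equation. With $v\e(\x)=\varphi(x_1)/\sqrt{\eps}$ the gradient contribution from the rooms,
\[
\frac{1}{\sqrt{\eps}}\sum_{j}\int_{B_j\e}\partial_{x_1}u\e\,\varphi'(x_1)\,\d\x,
\]
is only $O(1)$, not $o(1)$. Replacing $\varphi'(x_1)$ by $\varphi'(x_j\e)$ and integrating by parts (or using the divergence theorem) leaves a boundary term $\displaystyle\int_{\partial B_j\e}(u\e-\<u\e\>_{B_j\e})\,n_1\,\d s$, and the best one gets from trace plus Poincar\'e on $\eps B$ is $C\eps\|\nabla u\e\|_{L_2(B_j\e)}$; after dividing by $\sqrt{\eps}$ and summing over $\sim\eps^{-1}$ cells this is still $O(1)$. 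The same issue hits the passages: for admissible parameters with $q=\infty$ (e.g.\ $h\e=\sqrt{\eps}$, $d\e=\eps^{3/2}$) one has $d\e h\e/\eps^2\not\to 0$, so the crude Cauchy--Schwarz bound on $\frac{1}{\sqrt{\eps}}\sum_j\int_{T_j\e}\partial_{x_1}u\e\,\varphi'$ does not vanish either. Neither ``$d\e=o(\eps)$'' nor the Poincar\'e control across the passage helps here, because what appears is the $x_1$-derivative of $u\e$ inside the protuberances, about which you have no smallness information.

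The paper sidesteps this entirely: it plugs into \eqref{int_eq} the test function \eqref{we} with $w_1\equiv w_2$. Then $w\e$ is \emph{constant} on every $T_j\e$ and every $B_j\e$, so the problematic gradient integrals $I_3$ (and the room gradient) are identically zero, and the remaining terms pass to the limit exactly as in the proof of Theorem~\ref{th1}. This yields \eqref{int_eq_final1+} directly, with $u_1$ and $u_2$ still decoupled; only afterwards is the relation $u_2=|B|^{1/2}u_1$ inserted. Your plan can be repaired simply by switching to that test function.
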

\bigskip

In what follows we consider the case $q>0$ only. Our next goal is be to describe the behaviour of $\sigma(\mathcal{H}\e)$ as $\eps\to 0$. 

\begin{theorem}\label{th2} 
Let $L>0$ be an arbitrary number.
Then 
\begin{gather}\label{Hausdorff}
\mathrm{dist}_H\left(\sigma(\mathcal{H}\e)\cap [0,L],\, \sigma(\mathcal{H})\cap [0,L]\right)\to 0\text{ as }\eps\to 0,
\end{gather}
where $\mathrm{dist}_H(\cdot,\cdot)$ stays for the Hausdorff distance between two sets. \footnote{For two compact sets $X,Y\subset\mathbb{R}$ one has:
$\mathrm{dist}_H(X,Y)=\max\limits\left\{\sup\limits_{
x\in X }\inf\limits_{y\in Y} |x-y|;
\sup\limits_{
y\in Y }\inf\limits_{x\in X} |y-x|\right\}$.}
\end{theorem}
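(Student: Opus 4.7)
The plan is to split the Hausdorff convergence \eqref{Hausdorff} into two one-sided inclusions: (a) every $\lambda\in \sigma(\mathcal{H})\cap [0,L]$ is the limit of some sequence $\lambda\e\in \sigma(\mathcal{H}\e)$; and (b) any accumulation point, as $\eps\to 0$, of a family $\lambda\e\in \sigma(\mathcal{H}\e)\cap [0,L]$ lies in $\sigma(\mathcal{H})$. Since $\mathcal{H}$ and $\mathcal{H}\e$ are self-adjoint and $[0,L]$ is compact, (a) and (b) together are equivalent to \eqref{Hausdorff}.

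For (a), I would invoke the Weyl criterion in $[L_2(\mathbb{R})]^2$: for $\lambda\in \sigma(\mathcal{H})$ there exists a sequence of compactly supported $V_n=(v_{1,n},v_{2,n})\in \mathrm{dom}(\mathcal{H})$ with $\|V_n\|=1$ and $\|(\mathcal{H}-\lambda)V_n\|\to 0$. For each fixed $n$, I would construct a test function $w_n\e\in H^1(\Omega\e)$ by essentially inverting the near-isometries $J_1\e,J_2\e$: set $w_n\e(\x)=\eps^{-1/2}v_{1,n}(x_1)$ on $\Pi\e$; set $w_n\e\equiv (\eps|B_j\e|)^{-1/2}\intl_{x_j\e-\eps/2}^{x_j\e+\eps/2}v_{2,n}(x)\,\d x$ on each ``room'' $B_j\e$; and interpolate linearly in $x_2$ across each ``passage'' $T_j\e$. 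The same computations that underlie Theorem \ref{th1} -- in particular the limit $d\e/(h\e\eps^2|B|)\to q$ which governs the passage contribution to $\|\nabla w_n\e\|^2$ -- then give $\|w_n\e\|_{L_2(\Omega\e)}\to 1$ and $\mathfrak{h}\e[w_n\e,w_n\e]-\lambda\|w_n\e\|_{L_2(\Omega\e)}^2\to (\mathcal{H}V_n,V_n)-\lambda$ as $\eps\to 0$; a diagonal extraction over $n$ yields a Weyl singular sequence for $\mathcal{H}\e$ at some $\lambda\e\to \lambda$.

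For (b), assume $\lambda\e\in \sigma(\mathcal{H}\e)\cap [0,L]$ with $\lambda\e\to \lambda$ and suppose for contradiction that $\lambda\in (q,q+q|B|)\subset \mathbb{R}\setminus\sigma(\mathcal{H})$. Pick singular sequences $v\e$ with $\|v\e\|_{L_2(\Omega\e)}=1$ and $g\e:=(\mathcal{H}\e-\lambda\e)v\e\to 0$, fix $\mu>0$, and rewrite $v\e=(\mathcal{H}\e+\mu I)^{-1}f\e$ with $f\e=(\lambda\e+\mu)v\e+g\e$. By \eqref{iso0} the sequences $J_i\e f\e$ and $J_i\e v\e$ are bounded in $L_2(\mathbb{R})$; along a subsequence $J_i\e f\e\rightharpoonup f_i$ and $J_i\e v\e\rightharpoonup u_i$. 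Theorem \ref{th1} identifies $U=(u_1,u_2)=(\mathcal{H}+\mu I)^{-1}F$ with $F=(f_1,f_2)$, while linearity of $J_i\e$ together with the strong convergence $J_i\e g\e\to 0$ forces $f_i=(\lambda+\mu)u_i$. Hence $\mathcal{H}U=\lambda U$; since $\lambda\notin \sigma(\mathcal{H})$ this gives $U=0$, i.e.\ $J_i\e v\e\rightharpoonup 0$ in $L_2(\mathbb{R})$.

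The main obstacle will be converting $J_i\e v\e\rightharpoonup 0$ into the contradiction $\|v\e\|_{L_2(\Omega\e)}\to 0$. The inequalities \eqref{iso1}-\eqref{iso2}, the a priori bound $\|\nabla v\e\|_{L_2(\Omega\e)}^2=(\mathcal{H}\e v\e,v\e)=\lambda\e+o(1)\le L+1$, and the compact embedding $H^1(-R,R)\hookrightarrow L_2(-R,R)$ yield $\|v\e\|_{L_2(\Omega\e\cap\{|x_1|<R\})}\to 0$ for every fixed $R$, but nothing in these estimates prevents the $L_2$-mass of $v\e$ from escaping to $|x_1|\to\infty$. The natural way to close this loophole is to exploit the $\eps$-periodicity of $\Omega\e$ through a Floquet--Bloch decomposition: each fibre $\mathcal{H}\e(\theta)$ acts on a single period cell, has compact resolvent, and carries a genuine eigenfunction for every spectral value, so one may replace $v\e$ from the outset by an eigenfunction of $\mathcal{H}\e(\theta\e)$ supported in a bounded piece of $\Omega\e$; the escape-to-infinity issue then disappears and the remaining passage-to-the-limit becomes a periodic-cell analogue of the arguments behind Theorem \ref{th1}. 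This last step is likely the technical core of Section \ref{sec5}.
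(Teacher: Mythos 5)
Your overall decomposition into the two one-sided statements coincides with the paper's Remark \ref{rem1}, and your part (b) begins along the paper's actual route (Floquet--Bloch reduction plus resolvent-convergence identification of the limit), but both halves contain genuine gaps.

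In (a), the convergence $\mathfrak{h}\e[w_n\e,w_n\e]-\lambda\|w_n\e\|_{L_2(\Omega\e)}^2\to(\mathcal{H}V_n,V_n)-\lambda$ does not place spectrum near $\lambda$: a normalized $w$ with $(Aw,w)=\lambda\|w\|^2$ may be a superposition of spectral components far above and far below $\lambda$ (take $A$ with spectrum $\{0,2\}$, $\lambda=1$, $w=(e_0+e_2)/\sqrt{2}$). One needs $\|(\mathcal{H}\e-\lambda)w_n\e\|_{L_2(\Omega\e)}$ small, or at least a bilinear residual estimate $\sup_v|\mathfrak{h}\e[w_n\e,v]-\lambda(w_n\e,v)|\leq\alpha\e\|v\|_{H^1(\Omega\e)}$ with $\alpha\e\to0$; but your $w_n\e$ is not in $\mathrm{dom}(\mathcal{H}\e)$ (its normal derivative jumps across the junctions $S_j\e$ and $C_j\e$), so establishing such an estimate requires precisely the junction bounds of Lemma \ref{lm2} and is not a routine repetition of Theorem \ref{th1}. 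The paper avoids this entirely: it proves property (ii) by contradiction, assuming a $\delta$-neighbourhood of $\lambda$ free of $\sigma(\mathcal{H}\e)$, solving $(\mathcal{H}\e-\lambda)u\e=f\e$ with uniform bounds coming from that spectral gap, and passing to the limit to produce a solution of $(\mathcal{H}-\lambda)U=F$ for an $F\notin\mathrm{range}(\mathcal{H}-\lambda I)$.

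In (b), passing to the fibre operators removes escape to infinity in $x_1$ but not the real obstruction. On the period cell $\widetilde J_1\e u\e\to u_1$ strongly in $L_2$ by Rellich, but $\widetilde J_2\e u\e$ converges only weakly, and a normalized eigenfunction can concentrate in the rooms with sign-oscillating averages, so that $U=(u_1,u_2)=0$ while $\suml_{j}|B_j\e|\left|\langle u\e\rangle_{B_j\e}\right|^2\to1$; no contradiction with $\|u\e\|_{L_2(\widetilde\Omega\e)}=1$ arises. This degenerate case genuinely occurs -- it is exactly how the gap edge $q$ enters the limit spectrum -- and it is the technical core of Section \ref{sec5}: when $u_1=0$ one must prove directly that $\lambda\e\to q$, which the paper does by comparing $u\e$ with the explicit function $v\e$ equal to $\langle u\e\rangle_{B_j\e}$ on the rooms, linear in the passages and zero in the strip, correcting it by its projection $w\e$ onto the lower eigenfunctions, and invoking the min-max principle. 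This step cannot be obtained as ``a periodic-cell analogue of Theorem \ref{th1}'' and is missing from your proposal.
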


\begin{remark}\label{rem1}
The claim of Theorem \ref{th2} is equivalent to the fulfilment of the following conditions:
\begin{itemize}
 \item[(i)] Let the family $\left\{\lambda^{\eps}\in\sigma(\mathcal{H}\e)\right\}_{\eps}$ 
 have a convergent subsequence, i.e. $\lambda^{\eps}\to\lambda$ as $\eps=\eps_k\to 0$. Then $\lambda\in \sigma(\mathcal{H})$.

 \item[(ii)] Let $\lambda\in \sigma(\mathcal{H})$. Then there exists a family $\left\{\lambda^{\eps}\in\sigma(\mathcal{H}\e)\right\}_{\eps}$ such that   $\liml_{\eps\to 0}\lambda\e=\lambda$.

\end{itemize}

\end{remark}

It is easy to see that the spectrum of $\mathcal{H}$ has the following form:
\begin{gather}\label{sigmaH}
\sigma(\mathcal{H})=[0,\infty)\setminus \left(q ,q+q|B| \right).
\end{gather}
Indeed, for $\lambda\not= q $ the resolvent equation $\mathcal{H}U-\lambda U=F$ is
equivalent to
$$-u_1''-\rho(\lambda) u_1=f_1+{q|B|^{1/2}\over q-\lambda  }f_2,\quad u_2={q|B|^{1/2} \over q-\lambda  }u_1+{1\over q-\lambda}f_1,\quad\text{where }\rho(\lambda)=\lambda\left(1+{q|B|\over q-\lambda }\right),$$
whence, evidently, 
$\lambda\in\sigma(\mathcal{H})\setminus\{q \}$ iff $\rho(\lambda)\in\sigma(-{\d^2\over \d x^2}|_{L_2(\mathbb{R})})=[0,\infty)$. But  $\rho(\lambda)\in [0,\infty)$ iff $\lambda\in [0,\infty)\setminus [q    ,q+q|B| )$. Finally, $q \in \sigma(\mathcal{H})$ since $\sigma(\mathcal{H})$ is a closed set.

\begin{remark}\label{rm-bounded}

Theorems \ref{th1}-\ref{th2} (after a natural reformulation) remain valid if $\Omega$ is a \textit{bounded} strip: $\Omega=(0,l)\times(-\eps,0)$, $l>0$. In this case $\sigma(\mathcal{H}\e)$ is purely discrete. The limit problems \eqref{res_probl} and \eqref{res_probl+} are now considered on $(0,l)$ with Neumann conditions at the endpoints. If $q<\infty$  the spectrum of the corresponding limit operator $\mathcal{H}$ has the form
$$\sigma(\mathcal{H})=\left\{\lambda_k^-,\ k=1,2,3\dots\right\}\cup \left\{\lambda_k^+,\ k=1,2,3\dots\right\}\cup\{q\},$$
where the point $\lambda_k^{\pm}$ belong to the discrete spectrum, $q$ is the only point of the essential spectrum and 
$$\lambda_1^-=0,\quad\lambda_1^+=q+q|B|,\quad\lambda_k^-\nearrow q,\quad \lambda_k^+\nearrow\infty\text{\quad as }k\to\infty.$$ 
The proofs repeat  word-by-word the proofs for  the unbounded case.
\end{remark}\medskip

From Theorem \ref{th2} and \eqref{sigmaH} we conclude that $\sigma(\mathcal{H}\e)$ has at least one gap provided $\eps$ is small enough. Moreover, there is a gap converging to the interval $(q ,q+q|B| )$ as $\eps\to 0$. 
Unfortunately, Hausdorff convergence provides no information on the upper bound for the number of gaps, even within finite intervals: for example, the set 
$\sigma\e:=[0,L]\cap\bigg(\cupl_{k\in
\mathbb{N}}\left[\eps k,\eps(k+{1\over 2})\right]\bigg)$
converges to $[0,L]$ in the Hausdorff sense, but
the number of gaps in $\sigma\e$ tends to infinity as
$\eps\to 0$. Nevertheless, for our problem one can say more, namely, the following lemma take place.

\begin{lemma}\label{lm1}
Within an arbitrary compact interval $[0,L]$ the spectrum of $\mathcal{H}\e$ has at most one gap provided $\eps$ is small enough.
\end{lemma}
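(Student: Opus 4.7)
The strategy is to combine the Floquet--Bloch decomposition with a min-max bound on the third Bloch eigenvalue. Using the $\eps$-periodicity of $\Omega\e$ in $x_1$ I would write
$$\mathcal{H}\e=\intl^{\oplus}_{\theta\in\mathcal{B}\e}\mathcal{H}\e(\theta)\,\d\theta,\qquad \mathcal{B}\e=(-\pi/\eps,\pi/\eps],$$
where $\mathcal{H}\e(\theta)$ is the Laplacian on one period cell $Y\e=\Pi_0\e\cup T_0\e\cup B_0\e$ with $\theta$-quasi-periodic conditions on the two vertical sides and Neumann conditions on the remainder of $\partial Y\e$. Its spectrum is purely discrete, the eigenvalues $\lambda_1\e(\theta)\le\lambda_2\e(\theta)\le\dots$ are continuous in $\theta$, and $\sigma(\mathcal{H}\e)=\cupl_{k\ge 1}I_k\e$ where each band $I_k\e=\{\lambda_k\e(\theta):\theta\in\mathcal{B}\e\}$ is a compact interval. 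Since the union of two closed intervals inside $[0,L]$ leaves at most one gap, the lemma reduces to the claim
$$\minl_{\theta\in\mathcal{B}\e}\lambda_3\e(\theta)\longrightarrow+\infty\qquad\text{as }\eps\to 0.$$

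For the key bound I would invoke the min-max principle with two ``limit-fibre trial modes'' mimicking the $2\times 2$ matrix in \eqref{H}. For each $\theta\in\mathcal{B}\e$, let
$$\phi_1\e(\x)=\eps^{-1}e^{i\theta x_1}\chi_{\Pi_0\e}(\x),\qquad \phi_2\e(\x)=|B_0\e|^{-1/2}\chi_{B_0\e}(\x).$$
Since these two functions have disjoint supports they are linearly independent, so the orthogonality conditions $\langle\psi,\phi_1\e\rangle_{L_2(Y\e)}=\langle\psi,\phi_2\e\rangle_{L_2(Y\e)}=0$ cut out a codimension-two subspace of $H^1_\theta(Y\e)$, and min-max yields
$$\lambda_3\e(\theta)\ \ge\ \inf\bigl\{\|\nabla\psi\|^2_{L_2(Y\e)}/\|\psi\|^2_{L_2(Y\e)}:\ \psi\text{ in the above subspace}\bigr\}.$$
For $\psi$ satisfying both orthogonality relations I would estimate the contributions of the three pieces of $Y\e$ separately. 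On $\Pi_0\e$ the first relation forces $\psi|_{\Pi_0\e}$ to be $L_2$-orthogonal to the ground Floquet mode $e^{i\theta x_1}$ of the quasi-periodic-Neumann Laplacian on the square $(-\eps/2,\eps/2)\times(-\eps,0)$; a direct separation-of-variables computation shows that the next Floquet eigenvalue on this square is at least $(\pi/\eps)^2$ for every $\theta\in\mathcal{B}\e$, giving $\|\psi\|^2_{L_2(\Pi_0\e)}\le(\eps/\pi)^2\|\nabla\psi\|^2_{L_2(\Pi_0\e)}$. On $B_0\e$ the second relation forces $\intl_{B_0\e}\psi\,\d\x=0$, and rescaling Poincar\'e's inequality on the fixed Lipschitz domain $B$ yields $\|\psi\|^2_{L_2(B_0\e)}\le C\eps^2\|\nabla\psi\|^2_{L_2(B_0\e)}$. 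On the passage $T_0\e$ the fundamental theorem of calculus in $x_2$, together with the scaled trace inequality on $\Pi_0\e$, gives $\|\psi\|^2_{L_2(T_0\e)}\le C(h\e\eps+(h\e)^2)\|\nabla\psi\|^2_{L_2(Y\e)}$. Summing the three bounds produces $\|\psi\|^2_{L_2(Y\e)}\le o(1)\|\nabla\psi\|^2_{L_2(Y\e)}$, so $\lambda_3\e(\theta)\to+\infty$ uniformly in $\theta$.

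The main obstacle is the uniformity of the strip Poincar\'e inequality over the \emph{whole} Brillouin zone, which expands to $\mathbb{R}$ as $\eps\to 0$. A naive choice $\phi_1\e=\eps^{-1}\chi_{\Pi_0\e}$ (the constant mode) would work for $\theta$ near $0$ but would fail near $\theta=\pi/\eps$, where the constant is no longer close to any low-lying Floquet mode. Subtracting instead the phase-adjusted mode $e^{i\theta x_1}/\eps$ handles the entire Brillouin zone simultaneously, because the second Floquet-Neumann eigenvalue on the small square stays at least $(\pi/\eps)^2$ for every $\theta\in\mathcal{B}\e$. This is what turns the strip Poincar\'e constant into the desired $\eps^2$ and yields the quantitative blow-up of $\lambda_3\e(\theta)$.
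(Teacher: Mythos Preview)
Your argument is correct, and it takes a genuinely different route from the paper's.

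The paper works on the minimal period cell $D\e=B\e\cup G\e$ (with $G\e=Y\e\cup T\e$) and uses a Neumann bracketing: it decouples the room $B\e$ from $G\e$, observes that the first Neumann eigenvalue of $B\e$ is $0$ while the second scales like $\eps^{-2}$, and shows that for every fixed quasi-momentum $\varphi\neq 0$ the first eigenvalue of $-\Delta^{\varphi}_{G\e}$ tends to infinity (here the quasi-periodic Poincar\'e inequality carries the factor $|1-e^{i\varphi}|^{-2}$, so the bound is \emph{not} uniform near $\varphi=0$). This yields $\mu_2^{\varphi,\eps}\to\infty$ for each $\varphi\neq 0$, hence the \emph{top} of the second band escapes $[0,L]$, which forces any second gap to start above $L$.

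You instead bound the \emph{bottom} of the third band: by taking the two-dimensional constraint subspace spanned by the $\theta$-adjusted plane wave $e^{i\theta x_1}\chi_{\Pi_0\e}$ and the room indicator $\chi_{B_0\e}$, the max--min principle gives $\lambda_3\e(\theta)\geq C\eps^{-2}$ uniformly in $\theta$. The key difference is that your phase correction lets you exploit the exact second Floquet--Neumann eigenvalue on the square, which stays $\geq(\pi/\eps)^2$ across the whole Brillouin zone, so no $\varphi\neq 0$ restriction is needed. The paper's decoupling is perhaps more modular (it separates the geometric pieces cleanly), while your argument is more elementary and delivers a quantitatively stronger, uniform-in-$\theta$ conclusion; it also avoids the logarithmic trace estimate \eqref{strong} by using the cruder full-edge trace on the $\eps$-square. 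Both arrive at ``at most one gap in $[0,L]$'' by the same final step (only two bands can meet $[0,L]$).
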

\medskip

Combining Theorem \ref{th2} and Lemma \ref{lm1}, we arrive at the main result of this work.
\begin{theorem}\label{th3}
Let $L>0$ be an arbitrary number. Then the spectrum of the operator $\mathcal{H}\e$ in $[0,L]$ has the following structure for $\eps$ small enough:
\begin{gather*}
\sigma(\mathcal{H}\e)\cap [0,L]=[0,L]\setminus
(\a\e,\b\e),
\end{gather*}
where the endpoints of the interval $(a\e,  \beta\e)$ satisfy 
\begin{gather}\label{main2}
\liml_{\eps\to 0}\a\e=q ,\quad \liml_{\eps\to 0}\b\e=q+q|B| .
\end{gather}
\end{theorem}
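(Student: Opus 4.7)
The proof plan is to derive Theorem \ref{th3} as a bookkeeping consequence of Theorem \ref{th2}, Lemma \ref{lm1}, and the explicit description \eqref{sigmaH} of $\sigma(\mathcal{H})$. To handle boundary effects cleanly I would work on an enlarged interval $[0,L_0]$ where $L_0:=\max\{L,\,q+q|B|\}+1$.

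The first step is to certify that a gap of $\sigma(\mathcal{H}\e)$ actually persists near the limit gap $(q,q+q|B|)$. Fix the interior point $\mu_0:=q+\tfrac{1}{2}q|B|\in (q,q+q|B|)$; by \eqref{sigmaH} one has $\mu_0\notin\sigma(\mathcal{H})$, so Hausdorff convergence (Theorem \ref{th2} applied with $L_0$ in place of $L$) produces $\eps_0>0$ such that $\mu_0\notin\sigma(\mathcal{H}\e)$ whenever $\eps<\eps_0$. Since $\sigma(\mathcal{H}\e)$ is closed and unbounded above (standard Floquet--Bloch structure of a Neumann Laplacian on a periodic domain), the maximal open interval containing $\mu_0$ in the complement of $\sigma(\mathcal{H}\e)$ has both endpoints in the spectrum, and for $\eps$ small enough it lies inside $[0,L_0]$. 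Lemma \ref{lm1} ensures uniqueness of such a gap in $[0,L_0]$ for $\eps$ small; denote its endpoints by $\alpha\e<\beta\e$, so that $\sigma(\mathcal{H}\e)\cap[0,L_0]=[0,\alpha\e]\cup[\beta\e,L_0]$.

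The second step is to locate the limits of $\alpha\e$ and $\beta\e$ via Remark \ref{rem1}. Applying part (ii) to $\lambda=q\in\sigma(\mathcal{H})$ yields $\lambda\e\in\sigma(\mathcal{H}\e)$ with $\lambda\e\to q$; for small $\eps$ one has $\lambda\e<\mu_0$, and since $\mu_0\in(\alpha\e,\beta\e)$ this forces $\lambda\e\leq\alpha\e$, giving $\liminf_{\eps\to 0}\alpha\e\geq q$. Conversely, for any subsequential limit $\alpha$ of $\{\alpha\e\}$, part (i) gives $\alpha\in\sigma(\mathcal{H})$; as $\alpha\leq\mu_0<q+q|B|$, the description \eqref{sigmaH} forces $\alpha\leq q$, hence $\limsup_{\eps\to 0}\alpha\e\leq q$. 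Together these yield $\alpha\e\to q$. An entirely symmetric argument based on $\lambda=q+q|B|\in\sigma(\mathcal{H})$ (using now that $\beta\e>\mu_0$ and that $\sigma(\mathcal{H})\cap(\mu_0,\infty)\subset[q+q|B|,\infty)$) delivers $\beta\e\to q+q|B|$.

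Finally, intersecting with the original interval: since $L\leq L_0$ and since $\beta\e\to q+q|B|<L_0$ guarantees $\beta\e<L_0$ for small $\eps$, the identity $\sigma(\mathcal{H}\e)\cap[0,L_0]=[0,L_0]\setminus(\alpha\e,\beta\e)$ restricts to $\sigma(\mathcal{H}\e)\cap[0,L]=[0,L]\setminus(\alpha\e,\beta\e)$, which together with \eqref{main2} is exactly the claim. The genuine obstacles in this argument are not here but upstream: the analytic content is carried entirely by Theorem \ref{th2} (resolvent-based Hausdorff convergence of the spectra) and Lemma \ref{lm1} (the one-gap estimate on finite intervals), and the present theorem is essentially the packaging of these two ingredients through \eqref{sigmaH}.
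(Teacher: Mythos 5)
Your proposal is correct and follows exactly the route the paper intends: Theorem \ref{th3} is stated there as an immediate combination of Theorem \ref{th2}, Lemma \ref{lm1} and the formula \eqref{sigmaH}, and your write-up simply supplies the (routine) bookkeeping — certifying the gap around an interior point $\mu_0$, invoking the one-gap lemma, and pinning down the endpoints via parts (i) and (ii) of Remark \ref{rem1}. No discrepancies with the paper's argument.
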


Theorems \ref{th1}-\ref{th2} as well as Lemma \ref{lm1} will be proven in the next sections.\bigskip

At the end of this section we introduce several notations, which further will be frequently used:
\begin{itemize}
\item $Y_j\e=\left\{\x\in\Pi\e:\ |x_1-x_j\e|<{\eps\over 2}\right\}$,

\item $S_j\e=\left\{\x\in\partial T_j\e:\ x_2=0\right\},$ 

\item $C_j\e=\left\{\x\in\partial T_j\e:\ x_2=h\e\right\}.$

\end{itemize}

The notation $\<u\>_{D}$ stays for the mean value of the function $u(\x)$ in the domain $D$, i.e.
$$\<u\>_{D}={1\over |D|}\intl_{D}u(\x)\d \x.$$
Also we keep the same notation if $D$ is a segment (e.g., $S_j\e$). In this case we integrate with respect to the natural coordinate on this segment, $|D|$ is its length.

\section{Proof of Theorem \ref{th1} \label{sec3}}

Let $\left\{f\e\right\}_\eps$ be a family of functions from $L_2(\Omega\e)$ satisfying \eqref{f},
$u\e=(\mathcal{H}\e+\mu I)^{-1} f\e$, $\mu>0$. One has the following standard estimates:
$$\|u\e\|_{L_2(\Omega\e)}\leq {1\over \mu}\|f\e\|_{L_2(\Omega\e)},\quad 
\|\nabla u\e\|^2_{L_2(\Omega\e)}\leq \|f\e\|_{L_2(\Omega\e)} \|u\e\|_{L_2(\Omega\e)},$$
whence, taking into account that $\|f\e\|_{L_2(\Omega\e)}\leq C$,
we obtain 
\begin{gather}\label{H1_est}
\|u\e\|^2_{H^1(\Omega\e)}\leq  C_1.
\end{gather}

Recall that the operators $J_j\e$, $j=1,2$ satisfy \eqref{iso0},
moreover, changing the order of integration with respect to $x_2$ and differentiation with respect to $x_1$, one can easily prove that for  $u\in H^1(\Omega\e)$   
\begin{gather}\label{J_H1}
\|(J_1\e u\e)'\|^2_{L_2(\mathbb{R})}\leq \|\partial_{x_1}u\e\|^2_{L_2(\Pi\e)}\leq  \|\nabla u\e\|^2_{L_2(\Omega\e)}.
\end{gather}
Then it follows from \eqref{iso0} (applied for $u\e$), \eqref{J_H1} that the families
$\left\{J_1\e u\e\right\}_\eps$ and  $\left\{ J_2\e u\e\right\}_\eps$ are uniformly bounded in  $H^1(\mathbb{R})$ and $L_2(\mathbb{R})$, respectively, and therefore there are a subsequence (for convenience, still indexed by $\eps$) and $u_1\in H^1(\mathbb{R})$, $u_2\in L_2(\mathbb{R})$ such that
\begin{gather}\label{conv12}
J_1\e u\e \rightharpoonup u_1\text{ in }H^1(\mathbb{R}),\quad
J_2\e u\e \rightharpoonup u_2\text{ in }L_2(\mathbb{R})\text{\quad as }\eps\to 0.
\end{gather}

Now, let us write the variational formulation of the resolvent equation \eqref{resolvent-eq}:
\begin{gather}\label{int_eq}
\intl_{\Omega\e}\bigg(\nabla u\e\cdot\nabla w +\mu u\e w \bigg) \d \x = 
 \intl_{\Omega\e} f\e w  \d \x,\ \forall w \in H^1(\Omega\e).
\end{gather}
Our strategy will be to plug into \eqref{int_eq} a specially chosen test-function $w=w\e$ and then pass to the limit as $\eps\to 0$ hoping to arrive at the equality $\mathcal{H}U+\mu U =F$  written in a weak form, where $\mathcal{H}$ is defined by \eqref{H}, $U=(u_1,u_2)$ and $F=(f_1,f_2)$. 

We choose this test-function as follows (below, as usual, $\x=(x_1,x_2)$):
\begin{gather}\label{we}
w=w\e(\x)=
\begin{cases}
\ds{1\over \sqrt{\eps}}\left(w_1(x_1)+\ds\suml_{j\in \mathbb{Z}}(w_1(x_j\e)-w_1(x_1))\varphi_j\e(\x)\right),& \x\in \Pi\e,\\
\ds {1\over h\e\sqrt{\eps}}\left({w_2(x_j\e)} -w_1(x_j\e) \right)x_2+{w_1(x_j\e)\over\sqrt{\eps}},& \x\in T_j\e,\\
\ds{1 \over \sqrt{\eps}}w_2(x_j\e),&\x\in B_j\e.
\end{cases}
\end{gather}
Here $w_1,w_2\in C^1_0(\mathbb{R})$ are arbitrary functions, 
$\varphi_j\e(\x)=\varphi\left({|x_1 -x_j\e|\over d\e}\right)$, where
$\varphi:\mathbb{R}\to\mathbb{R}$ is a smooth functions satisfying $\varphi(t)=1$ as $t\leq 1$ and $\varphi(t)=0$ as $t\geq 2$. Obviously $w\e\in H^1(\Omega\e)$ provided $d\e\leq {\eps\over 4}$ (this holds for $\eps$ small enough, see \eqref{ass_conc}).

We denote:
$$\I =\left\{j\in\mathbb{Z}:\ x_j\e\in\supp (w_1)\cup\supp(w_2)\right\}.$$
It is clear that 
\begin{gather}\label{number}
\suml_{j\in\I}1\leq C\eps^{-1}.
\end{gather}

Let us plug $w=w\e(\x)$  into \eqref{int_eq}. 
Since
$\mathrm{supp}(\varphi_j\e)\subset \overline{Y_j\e}$ and $w\e=\mathrm{const}.$ in $B_j\e$
we obtain from \eqref{int_eq}:
\begin{multline}\label{Iomega}
\underset{I_1}{\underbrace{\eps^{-{1\over 2}}\intl_{\Pi\e}\bigg(\nabla u\e(\x)\cdot \nabla w_1(x_1) +\mu u\e(\x) w_1(x_1)\bigg) \d \x}}
\\\displaybreak[3]
+\underset{I_2}{\underbrace{\eps^{-{1\over 2}}\suml_{j\in\mathbb{Z}}\intl_{Y_j\e}
\left(
\nabla u\e(\x)\cdot
\nabla \left(  (w_1(x_j\e)-w_1(x_1) )\phi_j\e(\x)\right)
\right)+\mu
u\e \big(w_1(x_j\e)-w_1(x_1)\big)\phi_j\e(\x)\bigg)\d \x}}+
\\
+\underset{I_3}{\underbrace{\suml_{j\in \mathbb{Z}}\intl_{T_j\e}\nabla u\e(\x)\cdot\nabla w\e(\x) \d\x}}+\underset{I_4}{\underbrace{\mu\suml_{j\in \mathbb{Z}}\intl_{T_j\e} u\e(\x) w\e(\x) \d \x }}
+\underset{I_5}{\underbrace{\eps^{-{1\over 2}}\mu\suml_{j\in \mathbb{Z}}w_2(x_j\e) \intl_{B_j\e} u\e(\x) \d \x}}=
\\
=
\underset{I_6}{\underbrace{\eps^{-{1\over 2}}\intl_{\Pi\e}f\e(\x)  w_1(x_1)  \d \x}}+\underset{I_7}{\underbrace{\eps^{-{1\over 2}}\suml_{j\in\mathbb{Z}}\intl_{Y_j\e} 
f\e(\x) \big(w_1(x_j\e)-w_1(x_1)\big)\phi_j\e(\x) \d \x}}
\\+\underset{I_8}{\underbrace{\suml_{j\in \mathbb{Z}}\intl_{T_j\e} f\e(\x) w\e(\x) \d \x }}
+\underset{I_9}{\underbrace{\eps^{-{1\over 2}}\suml_{j\in \mathbb{Z}}w_2(x_j\e) \intl_{B_j\e} f\e(\x) \d \x}}.
\end{multline}

Let us analyse step-by-step the terms $I_j$, $j=1,\dots,9$.\medskip

\noindent (${I_1}$) Using \eqref{conv12} and the definition of the operator $J_1\e$ we obtain:
\begin{multline}\label{Iomega1}
I_1= \eps^{-{1\over 2}}
\intl_{-\infty}^\infty \intl_{-\eps}^0 \left({\partial u\e\over\partial x_1}(x_1,x_2) {\d w_1\over \d x_1}(x_1)+\mu u\e w_1\right)  \d x_2 \d x_1\\ =
\intl_{\mathbb{R}}  \big( (J\e_1 u\e)'w_1'+\mu (J_1\e u\e) w_1\big)  \d x\underset{\eps\to 0}\to 
\intl_{\mathbb{R}}  \big( u_1'w_1'+\mu u_1 w_1\big)  \d x.
\end{multline}\medskip

\noindent (${I_2}$) 
One has the following properties (below $\varphi_j\e$ is regarded as a function of $x\in\mathbb{R}$):
\begin{gather*}
\supp\left( \left(w_1(x_j\e)-w_1\right)\phi_j\e \right)\subset
 \left\{x\in\mathbb{R}:\ |x-x_j\e|<2d\e\right\},\quad 
\left|\left(\left(w_1(x_j\e)-w_1\right)\phi_j\e\right)'\right|+\left|\left(w_1(x_j\e)-w_1\right)\phi_j\e\right|\leq C.
\end{gather*}
Using them, \eqref{ass_conc}, \eqref{H1_est} and \eqref{number} we obtain easily:
\begin{gather}\label{Iomega2}
|I_2|\leq C\eps^{-{1\over 2}}\|u\e\|_{H^1(\Pi\e)}\sqrt{\suml_{j\in \I}\eps d\e}\leq C_1 \sqrt{d\e\over\eps}\underset{\eps\to 0} \to 0.
\end{gather}\medskip

\noindent (${I_3}$) 
Integrating by parts and taking into account that $\Delta w\e=0$ in $T_j\e$, we obtain:
\begin{multline}\label{Iomega3add1}
I_3=
\suml_{j\in\mathbb{Z}}\intl_{x_j\e-{d\e\over 2}}^{x_j\e+{d\e\over 2}}\left(u\e(x_1,h\e) {\partial w\e \over\partial x_2}(x_1,h\e) -u\e(x_1,0) {\partial w\e \over\partial x_2}(x_1,0)\right) \d x_1\\=
{d\e\over h\e\sqrt{\eps}}\suml_{j\in\mathbb{Z}}\left(\langle u\e\rangle_{S_j\e} -  {\langle u\e\rangle_{C_j\e}}\right)\big(w_1(x_j\e)-w_2(x_j\e)\big).
\end{multline}

Let us introduce the operator $Q\e: C_0^1(\mathbb{R})\to L_2(\mathbb{R})$ by 
\begin{gather*}
(Q\e w)(x)=\suml_{j\in \mathbb{Z}}w(x_j\e)\chi_j\e(x)
\end{gather*}
(recall that $\chi_j(x)$ is the indicator function of the interval $\left(x_j\e-{\eps\over 2},x_j\e+{\eps\over 2}\right)$). 
It is easy to show that
\begin{gather}\label{Q}
\forall w\in C^1(\mathbb{R}):\quad Q\e w\underset{\eps\to 0}\to w\text{ in }L_2(\mathbb{R}).
\end{gather}

With this operator one can rewrite \eqref{Iomega3add1} as
\begin{multline}\label{Iomega3add2}
I_3=
{d\e\over  h\e\sqrt{\eps}}\suml_{j\in\mathbb{Z}}\left(\langle u\e\rangle_{Y_j\e} -{ \langle u\e\rangle_{B_j\e} }\right)\big(w_1(x_j\e)-w_2(x_j\e)\big)+\delta(\eps)\\=
{d\e\over h\e\eps^2}\intl_{\mathbb{R}}\left(J_1\e u\e - |B|^{-1/2}J_2 u\e\right)\left(Q\e w_1- Q\e w_2\right)\d x+\delta(\eps),
\end{multline}
where $\delta(\eps)=\ds\suml_{j\in\I}{d\e\over h\e\sqrt{\eps}}\left( \langle u\e\rangle_{S_j\e} - \langle u\e\rangle_{Y_j\e}-\langle u\e\rangle_{C_j\e} +\langle u\e\rangle_{B_j^{\eps}}\right)\left(w_1(x_j\e)-w_2(x_j\e)\right).$ The last equality in \eqref{Iomega3add2} follows simply from the definitions of the operators $J_1\e$, $J_2\e$, $Q\e$.

To estimate the reminder $\delta(\eps)$ we need an additional lemma.
\begin{lemma}\label{lm2}
One has:
\begin{gather}\label{lm2est1}
\forall u\in H^1(Y_j\e):\quad \left|\<u\>_{S_j\e}-\<u\>_{Y_j\e}\right|\leq C \sqrt{|\ln d\e|}\|\nabla u\|_{L_2(Y_j\e)},\\
\label{lm2est2}\forall u\in H^1(B_j\e):\quad 
\left|\<u\>_{C_j\e}-\<u\>_{B_j^{\eps}}\right|\leq C\sqrt{|\ln d\e|}\|\nabla u\|_{L_2(B_j\e)}.
\end{gather}
\end{lemma}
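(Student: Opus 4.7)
\textit{Plan.} My plan is to reduce both inequalities of the lemma to a single scale-invariant statement by rescaling, and then prove that statement by a polar-coordinate computation that isolates the logarithmic factor.

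\textit{Rescaling.} In $\mathbb{R}^2$ both $\|\nabla u\|_{L_2}$ and mean values are invariant under dilation. Translating and scaling $Y_j\e$ by $1/\eps$ reduces \eqref{lm2est1} to the same type of estimate on the fixed rectangle $Y=(-1/2,1/2)\times(-1,0)$, with $S_j\e$ replaced by a segment $\sigma_\delta$ of length $\delta:=d\e/\eps$ on its upper edge. Likewise, rescaling $B_j\e$ yields the fixed domain $B$, and by \eqref{assB2} the image of $C_j\e$ is a segment of length $\delta$ lying on the portion of $\partial B$ that coincides with the $x_1$-axis near the origin. Since $d\e=o(\eps)$ and $d\e\to 0$, one has $|\ln(d\e/\eps)|\leq 2|\ln d\e|$ for small $\eps$, so any bound involving $\sqrt{|\ln\delta|}$ translates back to $\sqrt{|\ln d\e|}$ up to a constant.

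\textit{Key inequality.} It will suffice to show the following: for any fixed bounded Lipschitz domain $\Omega_0\subset\mathbb{R}^2$ and any straight segment $\sigma_\delta\subset\partial\Omega_0$ of length $\delta$ (with $\Omega_0$ locally lying on one side of the line containing $\sigma_\delta$),
$$\left|\<v\>_{\sigma_\delta}-\<v\>_{\Omega_0}\right|\leq C\sqrt{|\ln\delta|}\,\|\nabla v\|_{L_2(\Omega_0)}\quad \forall v\in H^1(\Omega_0).$$
Subtracting the mean, I may assume $\<v\>_{\Omega_0}=0$, so Poincar\'{e} gives $\|v\|_{L_2(\Omega_0)}\leq C\|\nabla v\|_{L_2(\Omega_0)}$. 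Placing the origin at the midpoint of $\sigma_\delta$, assuming locally $\Omega_0\subset\{y_2<0\}$, and fixing $R\sim 1$ so that the half-disk $D_R:=\{|y|<R,\,y_2<0\}$ lies in $\Omega_0$, I will set
$$\phi(r)=\frac{1}{\pi}\intl_{-\pi}^{0} v(r\cos\theta,r\sin\theta)\,\d\theta,\quad 0<r\leq R.$$
Cauchy--Schwarz in polar coordinates yields the essential estimate
$$|\phi(r_1)-\phi(r_0)|^2\leq \pi^{-1}\ln(r_1/r_0)\,\|\nabla v\|^2_{L_2(\Omega_0)},\qquad 0<r_0<r_1\leq R,$$
which is the sole source of the logarithm.

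\textit{Three comparisons, and main obstacle.} I will decompose $\<v\>_{\sigma_\delta}-\<v\>_{\Omega_0}=[\<v\>_{\sigma_\delta}-\phi(\delta/2)]+[\phi(\delta/2)-\phi(R)]+\phi(R)$. The middle bracket is bounded by $C\sqrt{|\ln\delta|}\,\|\nabla v\|_{L_2(\Omega_0)}$ by the polar estimate with $r_0=\delta/2$, $r_1=R$. The term $\phi(R)$ is bounded by $C\|v\|_{H^1(\Omega_0)}\leq C\|\nabla v\|_{L_2(\Omega_0)}$ by the trace theorem and Poincar\'{e}. For the first bracket I will rescale the half-disk of radius $\delta/2$ to unit size: $\sigma_\delta$ becomes the unit diameter and the bounding semicircle becomes the unit semicircle, so both quantities are surface traces of a fixed $H^1$ function on the unit half-disk and differ by at most $C\|\nabla v\|_{L_2}$ via a standard Poincar\'{e}--trace estimate in a fixed domain. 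The delicate point, and the main obstacle, is precisely this first comparison: one must verify that its constant is genuinely $\delta$-independent, which relies on scale-invariance of $\|\nabla v\|_{L_2}$ and of mean values in 2D after the rescaling. The logarithm itself is unavoidable, reflecting the classical 2D ``capacity of a segment'' through $\int_{\delta/2}^R r^{-1}\d r$. Estimate \eqref{lm2est2} is handled identically, with \eqref{assB2} guaranteeing that a half-disk of fixed radius around the midpoint of the rescaled $C_j\e$ lies inside $B$.
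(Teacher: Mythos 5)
Your proof is correct, but it takes a genuinely different (and more self-contained) route than the paper. The paper splits $\<u\>_{S_j\e}-\<u\>_{Y_j\e}$ into $\<u\>_{S_j\e}-\<u\>_{\Gamma_j\e}$ plus $\<u\>_{\Gamma_j\e}-\<u\>_{Y_j\e}$, where $\Gamma_j\e$ is the whole upper edge of $Y_j\e$: the first difference, which carries the factor $\sqrt{|\ln d\e|}$, is simply imported by citation from Lemma~3.1 of \cite{CK15}, and the second is handled by a short trace--Poincar\'e computation; the passage to \eqref{lm2est2} uses the flat piece of $\partial B$ guaranteed by \eqref{assB2} exactly as you do. You instead reprove the logarithmic ingredient from scratch: after rescaling to a fixed domain you interpolate through angular averages $\phi(r)$ over concentric half-circles, extract the logarithm from $\intl_{\delta/2}^R r^{-1}\d r$ via Cauchy--Schwarz in polar coordinates, and close the two ends (tiny segment versus tiny semicircle; large semicircle versus cell mean) by scale-invariant trace--Poincar\'e estimates. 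Your computation of the middle term is right, the scale-invariance of $\|\nabla v\|_{L_2}$ and of means in $2$D does make the constant in the first comparison $\delta$-independent as you claim, and the reduction $|\ln(d\e/\eps)|\leq|\ln d\e|$ (since $d\e=o(\eps)$, $\eps\to 0$) is sound. What your approach buys is a complete argument exposing the origin of the logarithm (the $2$D capacity of a shrinking segment), at the cost of being longer; what the paper's approach buys is brevity, at the cost of resting on an external lemma whose proof is, in substance, the same polar-coordinate computation you carried out. One minor point worth making explicit in a final write-up: $\phi(r)$ is a priori defined only for a.e.\ $r$, so one should either note that your bound on $\intl_{r_0}^{r_1}|\phi'|\d r$ makes $\phi$ absolutely continuous on $(0,R]$, or replace $\phi(\delta/2)$ and $\phi(R)$ by averages over thin annuli.
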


\begin{proof}
For an arbitrary $u\in H^1(Y_j\e)$ one has the  following estimate (see \cite[Lemma~3.1]{CK15}):
\begin{gather}\label{est1+}
\left|\<u\>_{S_j\e}-\<u\>_{\Gamma_j\e}\right|\leq C_1 \sqrt{|\ln d\e|}\|\nabla u\|_{L_2(Y_j\e)},
\end{gather}
where $\Gamma_j\e=\left\{\x\in\partial Y_j\e:\ x_2=0\right\}$.
Moreover, using the trace  and   Poincar\'{e} inequalities we obtain
\begin{multline}\label{est1++}
\left|\<u\>_{\Gamma_j\e}-\<u\>_{Y_j\e}\right| = 
\left| \eps^{-1}\intl_{x_j\e-{\eps\over 2}}^{x_j\e+{\eps\over 2}}\left(u(x_1,0)-\<u\>_{Y_j\e}\right) \d x_1\right|
\leq \eps^{-1/2}\sqrt{\intl_{x_j\e-{\eps\over 2}}^{x_j\e+{\eps\over 2}} \left|u(x_1,0)-\<u\>_{Y_j\e}\right|^2\d x_1}
\\ \leq C\eps^{-1/2}\sqrt{\eps\|\nabla u\|^2_{L_2(Y_j\e)}+\eps^{-1}\left\|u-\<u\>_{Y_j\e}\right\|^2_{L_2(Y_j\e)}}\leq 
C_1\|\nabla u\|_{L_2(Y_j\e)}.
\end{multline}
Then \eqref{lm2est1} follow from \eqref{est1+}-\eqref{est1++}. 
The proof of estimate \eqref{lm2est2} is similar (instead of $\Gamma_j\e$ one should use the set $\left\{\x\in \mathbb{R}^2:\ x_2=h\e,\ |x_1-x_j\e|<R\eps\right\}$, $R$ is defined in \eqref{assB2}).
\end{proof}

\begin{remark}
Using similar arguments (cf. \cite[Lemma~3.1]{CK15}) one can also prove the estimate
\begin{gather}\label{strong} 
\forall u\in H^1(Y_j\e):\quad 
\|u\|^2_{L_2(S_j\e)} \leq 
d\e{\eps^{-2}}\|u\|^2_{L_2(Y_j\e)}+  C{d\e|\ln d\e|}\|\nabla u\|^2_{L_2(Y_j\e)}.
\end{gather}
We will use it later in the proof of Theorem \ref{th2}.
\end{remark}\medskip

Using Lemma \ref{lm2} and taking into account \eqref{ass1}, \eqref{ass3}, \eqref{H1_est} and \eqref{number} we get:
\begin{multline}\label{delta_est}
|\delta(\eps)|\leq  {d\e\over h\e\sqrt{\eps}}
\sqrt{\suml_{j\in\I}\left(\left| \langle u\e\rangle_{S_j\e} - \langle u\e\rangle_{Y_j\e}\right|^2+\left|\langle u\e\rangle_{B_j^{\eps}}-\langle u\e\rangle_{C_j\e}\right|^2\right)}\\\times\sqrt{\suml_{j\in\I}\left(\maxl_{x\in \mathbb{R}}|w_1(x)|^2+\maxl_{x\in \mathbb{R}}|w_2(x)|^2\right)}\leq C\sqrt{\eps^2 |\ln d\e|} {d\e\over h\e \eps^2}\|\nabla u\e\|_{L_2(\Omega\e)}\underset{\eps\to 0}\to 0.
\end{multline}

Combining \eqref{Iomega3add2} and \eqref{delta_est} and taking into account
 \eqref{ass3}, \eqref{conv12},  \eqref{Q}, we obtain: 
\begin{gather}\label{Iomega3}
I_3\underset{\eps\to 0}\to q|B|\intl_{\mathbb{R}}\left(u_1-|B|^{-1/2}u_2\right)\left(w_1-w_2\right)\d x.
\end{gather}\medskip

\noindent (${I_4}$) Taking into account that $\maxl_{\x\in T_j\e}|w\e(\x)|\leq C\eps^{-1/2}$ and using \eqref{ass2}, \eqref{ass3} and \eqref{number} we estimate:
\begin{gather}\label{Iomega4}
|I_4|\leq C\|u\|_{L_2(\cup_j T_j\e)}\sqrt{\suml_{j\in\I} |T_j\e|\eps^{-1}}\leq C_1 h\e \sqrt{d\e\over h\e\eps^2} \underset{\eps\to 0}\to 0.
\end{gather}\medskip

\noindent (${I_5}$) Using \eqref{conv12}, \eqref{Q} we arrive at
\begin{gather}\label{Iomega5}
I_5= 
\mu|B|^{1/2}\intl_{\mathbb{R}} (J_2\e u\e)(Q\e w_2)\d x\underset{\eps\to 0}\to \mu|B|^{1/2}\intl_{\mathbb{R}} u_2 w_2\d x.
\end{gather}
Here the first equality follows simply from the definitions for the operators $J_2\e$ and $Q\e$.\medskip

\noindent (${I_6}$)-(${I_9}$) By virtue of arguments similar to those ones in (${I_1}$), (${I_2}$), (${I_4}$), (${I_5}$) and taking into account \eqref{f} we obtain the following asymptotic behavior for the terms in the right-hand-side of \eqref{Iomega}:
\begin{gather}\label{Iomega6-9}
I_6\e\underset{\eps\to 0}\to \intl_{\mathbb{R}} f_1 w_1\d x,\quad
I_7\e\underset{\eps\to 0}\to 0,\quad
I_8\e\underset{\eps\to 0}\to 0,\quad
I_9\underset{\eps\to 0}\to \mu|B|^{1/2}\intl_{\mathbb{R}} f_2 w_2\d x.
\end{gather}

Finally, combining \eqref{Iomega}-\eqref{Iomega2}, \eqref{Iomega3}-\eqref{Iomega6-9}, we arrive at the equality
\begin{multline}\label{int_eq_final1}
\intl_{\mathbb{R}}u'_1 w'_1 \d x+q|B|\intl_{\mathbb{R}}\left(u_1-|B|^{-1/2} u_2\right)
\left(w_1-w_2\right)\d x+\mu \intl_{\mathbb{R}}\left(u_1 w_1+|B|^{1/2} u_2 w_2 \right)\d x\\=
\intl_{\mathbb{R}}\left(f_1 w_1+|B|^{1/2} f_2 w_2 \right)\d x,
\end{multline}
which is valid for an arbitrary $w_1,w_2\in C^1_0(\mathbb{R})$ (and therefore, 
by the density arguments, for an arbitrary $w_1\in H^1(\mathbb{R})$ and $w_2\in L_2(\mathbb{R})$).

Taking $w_1\equiv 0$ in \eqref{int_eq_final1} we get
$
\intl_{\mathbb{R}}\left(-q|B|u_1+ q|B|^{1/2} u_2 +\mu   |B|^{1/2} u_2 - |B|^{1/2} f_2 \right)w_2 \d x$, $\forall w_2\in L_2(\mathbb{R})$,
whence
\begin{gather}\label{u2}
u_2={q |B|^{1/2}  \over q +\mu   }u_1+ {1   \over q +\mu   }f_2.
\end{gather}
Then, taking $w_2\equiv 0$ in \eqref{int_eq_final1} and using \eqref{u2}, we arrive at
\begin{gather*}
\intl_{\mathbb{R}}u'_1 w'_1 \d x+\mu\intl_{\mathbb{R}}\left(1+{q|B|\over q+\mu }\right) u_1 w_1   \d x=
\intl_{\mathbb{R}}\left(f_1  + { q|B|^{1/2}\over q+\mu } f_2\right)w_1 \d x,\ \forall w_1\in H^1(\mathbb{R}),
\end{gather*}
whence, $u_1$ belongs to $H^2(\mathbb{R})$ and is a solution to the problem \eqref{res_probl}.

Finally, since the problem \eqref{res_probl} has the unique solution and $u_2$ is uniquely determined  by $u_1$ via \eqref{u2}, then \eqref{conv12}  hold for the whole sequence $u\e$. Theorem \ref{th1} is proved.

\section{Proof of Theorem \ref{th1+} \label{sec4}}

Via the same arguments as in the proof of Theorem \ref{th1} we conclude that there is a subsequence (for convenience, still indexed by  $\eps$) and  $u_1\in H^1(\mathbb{R})$, $u_2\in L_2(\mathbb{R})$ such that \eqref{conv12} holds.

For an arbitrary $w\in H^1(\Omega\e)$ one has the equality \eqref{int_eq}.
We plug into this equality the function $w=w\e(\x)$ defined by \eqref{we}, but with 
$w_1(x)=w_2(x)$. In this case the terms $I_3$ and $I_8$ (see \eqref{Iomega}) are equal to zero, while the behaviour of the rest terms  is independent of whether $q$ is finite or not.
Thus, passing to the limit in \eqref{int_eq} we arrive at the equality
\begin{gather}\label{int_eq_final1+}
\intl_{\mathbb{R}}u'_1 w'_1 \d x+\mu \intl_{\mathbb{R}}\left(u_1 +|B|^{1/2} u_2 \right)w_1 \d x=
\intl_{\mathbb{R}}\left(f_1 +|B|^{1/2} f_2 \right)w_1\d x,
\end{gather}
which is valid for an arbitrary $w_1 \in H^1(\mathbb{R})$.

It remains to show that $u_2=|B|^{1/2}u_1$ (then, evidently, \eqref{int_eq_final1+} will imply $u_1\in H^2(\mathbb{R})$ and \eqref{res_probl+}). One has, using the definitions of the operators $J_1\e$ and $J_2\e$:
\begin{multline}\label{j2-j1}
\left\| J_2\e u\e - |B|^{1/2}J_1\e u\e\right\|_{L_2(\mathbb{R})}^2=
\suml_{j\in\mathbb{Z}}\intl_{x_j\e-\eps/2}^{x_j\e+\eps/2}\left|{|B|^{1/2}  \eps^{1/2}}\<u\e\>_{B_j\e} - {|B|^{1/2}  \eps^{-1/2}}\intl_{-\eps}^0 u\e(x_1,x_2)\d x_2 \right|^2 \d x_1\\=\eps^{-1}|B|\suml_{j\in\mathbb{Z}}\intl_{x_j\e-\eps/2}^{x_j\e+\eps/2}\left| \intl_{-\eps}^0\left(\<u\e\>_{B_j\e}- u\e(x_1,x_2)\right)\d x_2 \right|^2 \d x_1\leq
|B|\suml_{j\in\mathbb{Z}}\left\|\<u\e\>_{B_j\e}- u\e\right\|_{L_2(Y_j\e)}^2\\ 
\leq 4|B|\suml_{j\in\mathbb{Z}}\left(\left\|\<u\e\>_{Y_j\e}- u\e\right\|_{L_2(Y_j\e)}^2+
\eps^2\left(\left|\<u\e\>_{S_j\e}- \<u\e\>_{Y_j\e}\right|^2+ \left|\<u\e\>_{C_j\e}- \<u\e\>_{S_j\e}\right|^2+ \left|\<u\e\>_{B_j\e}- \<u\e\>_{C_j\e}\right|^2\right)\right).
\end{multline}
The following simple estimate holds (cf. \citep[Lemma 3.2]{CK15}):
\begin{gather}\label{s-c}
\forall u\in H^1(T_j\e):\ \left|\<u \>_{C_j\e}- \<u \>_{S_j\e}\right|^2\leq C{h\e  (d\e)^{-1}}\|\nabla u \|^2_{L_2(T_j\e)}.
\end{gather}
Then, using \eqref{lm2est1}, \eqref{lm2est2}, \eqref{s-c} and the Poincar\'{e} inequality, we obtain from \eqref{j2-j1}:
\begin{multline}\label{j2-j1+}
\left\| J_2\e u\e - |B|^{1/2}J_1\e u\e\right\|_{L_2(\mathbb{R})}^2\leq 
C_1\eps^2 \|\nabla u\e\|^2_{L_2(\cupl_{j\in\mathbb{Z}}Y_j\e)}\\+
C_2\eps^2|\ln d\e|\left( \|\nabla u\e\|^2_{L_2(\cupl_{j\in\mathbb{Z}}(B_j\e\cup Y_j\e))}\right)+
C_3\eps^2 h\e (d\e)^{-1} \|\nabla u\e\|^2_{L_2(\cupl_{j\in\mathbb{Z}}T_j\e)}\to 0\text{ as }\eps\to 0
\end{multline}
(here the the right-hand-side tends to zero due to \eqref{ass1} and \eqref{ass3} (recall, that $q=\infty$)). 
Finally, in view the Rellich embedding theorem, the weak convergence of $J_1\e u\e$ to $u_1$ in $H^1(\mathbb{R})$ implies
\begin{gather}\label{strongLL}
\forall L>0:\quad \|J_1\e u\e-u_1\|_{L_2(-L,L)}\to 0\text{ as }\eps\to 0. 
\end{gather}
From \eqref{j2-j1} and \eqref{strongLL} we deduce $u_2=|B|^{1/2}u_1$.

Since the problem \eqref{res_probl+} has the unique  solution and $u_2$ is uniquely determined  by $u_1$, then \eqref{conv12}  hold for the whole sequence $u\e$. 
Theorem \ref{th1+} is proved.

\section{Proof of Theorem \ref{th2} \label{sec5}}

Recall, that we have to check the fulfilment of the properties (i)-(ii)
(see Remark \ref{rem1}).

\subsection{Proof of the property (i)}

Let $\lambda\e\in\sigma(\mathcal{H}\e)$ and $\lambda\e\to \lambda$ as $\eps=\eps_k\to 0 $. We have to show that $\lambda\in\sigma(\mathcal{H})$. 

In what follows we will use the notation $\eps$ taking in mind $\eps_k$.
To simplify the presentation we suppose that $\eps$ takes values in the discrete set $ \left\{\eps:\ \eps^{-1}\in\mathbb{N}\right\}$. 
The general case needs slight modifications.

We denote
\begin{itemize} 

\item $\mathcal{N}\e=\{1,2,\dots,\eps^{-1}\}$,

\item $\widetilde\Pi^{\eps}=\left\{\x \in \mathbb{R}^2:\ 0<x_1<1,\ -\eps< x_2< 0\right\}$,

\item $\widetilde \Omega\e=\widetilde \Pi\e\cup\left(\ds\cupl_{j\in\mathcal{N}\e}\left(T_j\e\cup B_j\e\right)\right)$,

\end{itemize}
It is clear that the set $\widetilde\Omega\e$ is a period cell for $\Omega\e$, namely 
$$\Omega\e=\cupl_{k\in\mathbb{Z}}\overline{(\widetilde\Omega\e + k)},\quad (\widetilde\Omega\e + k)\cap (\widetilde\Omega\e + l)=\varnothing\text{ for }k,l\in\mathbb{Z},\ k\not= l.$$

Let $\varphi\in \mathbb{R}\backslash(2\pi\mathbb{Z})$. In the space $L_2(\widetilde\Omega\e)$ we introduce the sesquilinear form $\mathfrak{h}^{\varphi,\eps}$ by
\begin{gather*}
\mathfrak{h}^{\varphi,\eps}[u,v]=\intl_{\widetilde\Omega\e}\nabla u\cdot\overline{\nabla v} \d \x,\quad
\mathrm{dom}(\mathfrak{h}^{\varphi,\eps})=\left\{u\in H^1(\widetilde\Omega\e):\ u(1,x_2)=\exp(i\varphi)u(0,x_2)\text{ for }x_2\in (-\eps,0)\right\}.
\end{gather*}
We denote by $\mathcal{H}^{\varphi,\eps}$ the operator  associated with this form. One has $\mathcal{H}^{\varphi,\eps}u=-\Delta u$ in the generalized sense;
the function $u\in \mathrm{dom}(\mathcal{H}^{\varphi,\eps})$ satisfies (in a sense of traces)
$$u(1,x_2)=\exp(i\varphi)u(0,x_2),\quad {\partial u\over\partial x_1}(1,x_2)=\exp(i\varphi){\partial u\over\partial x_1}(0,x_2)\quad\text{for }x_2\in (-\eps,0).$$

The spectrum of $\mathcal{H}^{\varphi,\eps}$ is purely discrete.
We denote by $\left\{\lambda_k^{\varphi,\eps}\right\}_{k\in\mathbb{N}}$ the sequence of eigenvalues of $\mathcal{H}^{\varphi,\eps}$ arranged in the ascending order and repeated according to 
their multiplicity. By $\left\{u^{\varphi,\eps}_k\right\}_{k\in\mathbb{N}}$ we denote the corresponding sequence of eigenfunctions  such that $(u^{\varphi,\eps}_k,u^{\varphi,\eps}_l)_{L_2(\widetilde{\Omega}\e)}=\delta_{kl}$.

Using Floquet-Bloch theory (see, e.g., \citep{DLPSW11,K93,RS72}) we deduce the following relationship between the spectra of $\mathcal{H}\e$ and $\mathcal{H}^{\varphi,\eps}$:
\begin{gather}\label{Floquet}
\sigma(\mathcal{H}\e)=\cupl_{k\in\mathbb{N}}
\left\{ \lambda_k^{\varphi,\eps}:\ \varphi\in \mathbb{R}\backslash(2\pi\mathbb{Z})\right\}.
\end{gather}
For fixed $k\in\mathbb{N}$ the set $\left\{ \lambda_k^{\varphi,\eps}:\ \varphi\in \mathbb{R}\backslash(2\pi\mathbb{Z})\right\}$ is a compact interval.\smallskip

Since $\lambda\e\in\sigma(\mathcal{H}\e)$ then in view of \eqref{Floquet} there is $\varphi\e\in  \mathbb{R}\backslash(2\pi\mathbb{Z})$, $k\e\in\mathbb{N}$ such that $\lambda\e=\lambda_{k\e}^{\varphi,\eps}$. By 
 $u\e=u_{k\e}^{\varphi\e,\eps}$ we denote the corresponding eigenfunction. One has:
\begin{gather}\label{normal}
 \|u\e\|_{L_2(\widetilde\Omega\e)}=1\quad\text{(and, consequently, $\|\nabla u\e\|^2_{L_2(\widetilde\Omega\e)}=\lambda\e$)}.
\end{gather}

One can extract a convergent subsequence (for convenience, still indexed by $\eps$)
\begin{gather}\label{phi}
\varphi\e\to\varphi\in  \mathbb{R}\backslash(2\pi\mathbb{Z}).
\end{gather}

We define the operators $\widetilde J_1\e: H^1(\widetilde\Pi\e)\to H^1(0,1)$ and
$\widetilde J_2\e : L_2(\cupl_{j\in\mathcal{N}\e} B_j\e)\to L_2(0,1)$ by  \eqref{j1}-\eqref{j2} with $\mathcal{N}\e$ instead of $\mathbb{Z}$.
Via the same arguments as in the proof of Theorem \ref{th1}, we conclude from \eqref{normal} that the families $\left\{J_1\e u\e\right\}_\eps$ and  $\left\{ J_2\e u\e\right\}_\eps$ are uniformly bounded in  $H^1(0,1)$ and $L_2(0,1)$, respectively, and therefore there exists a subsequence (again indexed by $\eps$) and $u_1\in H^1(0,1)$, $u_2\in L_2(0,1)$ such that 
\begin{gather}\label{conv1+}
\widetilde J_1\e u\e \rightharpoonup u_1\text{ in }H^1(0,1),\\\label{conv2+}
\widetilde J_2\e u\e \rightharpoonup u_2\text{ in }L_2(0,1).
\end{gather}
Moreover, using the trace theorem, we obtain 
\begin{gather}\label{conv3+}
(\widetilde J_1\e u\e)(0) \to u_1(0),\quad (\widetilde J_1\e u\e)(1) \to u_1(1).
\end{gather}
It is clear that $(\widetilde J_1\e u\e)(1) =\exp(i \varphi\e )  (\widetilde J_1\e u\e)(0) $, whence, in view of \eqref{phi}  and \eqref{conv3+},  
\begin{gather}\label{phi+}
u_1(1)=\exp(i \varphi)  u_1(0).
\end{gather}
\smallskip

We start from the case $$u_1\not = 0.$$
We need the following analogue of Theorem \ref{th1}.
\begin{lemma}\label{lm+}
Let the family $\left\{f\e\in L_2(\widetilde\Omega\e)\right\}_\eps$ satisfy
\begin{gather}\label{f+}\|f\e\|_{L_2(\widetilde\Omega\e)}\leq C,\quad 
\widetilde J_1\e f\e\rightharpoonup f_1\text{ in }L_2(0,1),\quad \widetilde J_2\e f\e\rightharpoonup f_2\text{ in }L_2(0,1)\text{ as }\eps\to 0.
\end{gather}
We set $v\e_{f\e}=(\mathcal{H}^{\varphi\e,\eps}+\mu)^{-1}f\e$, where $\mu>0$. 
Then
$$\widetilde J_1\e v\e_{f\e}\rightharpoonup v_1\text{ in }H^1(0,1)\text{ as }\eps\to 0,$$
where $v_1\in H^2(0,1)$ satisfies 
$v_1(1)=\exp({i \varphi})  v_1(0),\quad v_1'(1)=\exp({i \varphi})  v_1'(0)$
and solves the problem \eqref{res_probl} on the interval $(0,1)$. Moreover 
$$\widetilde J_2\e u\e\rightharpoonup v_2={q |B|^{1/2}  \over q +\mu   }v_1+ {1 \over q +\mu  }f_2\ \text{ in }L_2(0,1).$$
\end{lemma}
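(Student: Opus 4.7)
\textbf{Proof plan for Lemma \ref{lm+}.}
The plan is to repeat the argument of Theorem \ref{th1} essentially verbatim, with two modifications: the calculations take place on the single period cell $\widetilde\Omega\e$ rather than the full waveguide $\Omega\e$, and all admissible trial/test functions must satisfy the Floquet--Bloch quasi-periodic condition of phase $\varphi\e$. First, from the standard bound for the resolvent of a non-negative self-adjoint operator together with $\|f\e\|_{L_2(\widetilde\Omega\e)}\le C$ I would obtain $\|v\e_{f\e}\|_{H^1(\widetilde\Omega\e)}\le C_1$. The cell-versions of \eqref{iso0}--\eqref{iso2} and \eqref{J_H1} then yield weak subsequential limits $\widetilde J_1\e v\e_{f\e}\rightharpoonup v_1$ in $H^1(0,1)$ and $\widetilde J_2\e v\e_{f\e}\rightharpoonup v_2$ in $L_2(0,1)$. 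Because $H^1(0,1)\hookrightarrow C[0,1]$ compactly, the trace identity $(\widetilde J_1\e v\e_{f\e})(1)=\exp(i\varphi\e)(\widetilde J_1\e v\e_{f\e})(0)$ passes to the limit and, using \eqref{phi}, gives $v_1(1)=\exp(i\varphi)v_1(0)$.

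For the test function I would fix arbitrary $w_1,w_2\in C^1[0,1]$ satisfying the $\varphi$-quasi-periodicity $w_k(1)=\exp(i\varphi)w_k(0)$ and construct their $\eps$-twisted versions
$$\widetilde w_{k,\eps}(x):=\exp\bigl(i(\varphi\e-\varphi)x\bigr)\,w_k(x),\qquad k=1,2,$$
which satisfy $\widetilde w_{k,\eps}(1)=\exp(i\varphi\e)\widetilde w_{k,\eps}(0)$ and converge to $w_k$ in $C^1[0,1]$ (their derivatives differ from $w_k'$ by $O(|\varphi\e-\varphi|)=o(1)$). Substituting $\widetilde w_{k,\eps}$ for $w_k$ in \eqref{we} produces a $w\e\in\mathrm{dom}(\mathfrak{h}^{\varphi\e,\eps})$: by \eqref{ass_conc} the bump corrections in \eqref{we} have supports of diameter $O(d\e)\ll\eps$ well inside $Y_j\e$, so for small $\eps$ they do not touch the vertical cell edges $x_1=0,1$, and the quasi-periodic trace condition on $w\e$ reduces to the one on $\widetilde w_{1,\eps}$.

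Plugging $w\e$ into $\mathfrak{h}^{\varphi\e,\eps}[v\e_{f\e},w\e]+\mu(v\e_{f\e},w\e)=(f\e,w\e)$ yields an exact analogue of the decomposition \eqref{Iomega}, summed over $j\in\mathcal{N}\e$ rather than $j\in\mathbb{Z}$. Each asymptotic evaluation $I_1,\dots,I_9$ from Section \ref{sec3} is strictly local on a single cell $Y_j\e\cup T_j\e\cup B_j\e$ and uses only the cardinality bound $|\mathcal{N}\e|\le C\eps^{-1}$ (the analogue of \eqref{number}), so these calculations transfer unchanged. Letting $\eps\to 0$ gives the cell analogue of \eqref{int_eq_final1} on $(0,1)$ for every $\varphi$-quasi-periodic $w_1\in H^1(0,1)$, $w_2\in L_2(0,1)$. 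Taking $w_1\equiv 0$ delivers $v_2=\frac{q|B|^{1/2}}{q+\mu}v_1+\frac{1}{q+\mu}f_2$; substituting back and varying $w_1$ over the $\varphi$-quasi-periodic subspace of $H^1(0,1)$ produces the weak form of \eqref{res_probl} on $(0,1)$, whence $v_1\in H^2(0,1)$ by standard elliptic regularity. Integration by parts then leaves a boundary residue $[v_1'\overline{w_1}]_0^1=\bigl(v_1'(1)\exp(-i\varphi)-v_1'(0)\bigr)\overline{w_1(0)}$ whose vanishing for every admissible $w_1(0)\in\mathbb{C}$ forces the natural condition $v_1'(1)=\exp(i\varphi)v_1'(0)$, and uniqueness of the quasi-periodic two-point problem upgrades the convergence to the whole sequence. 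The main technical obstacle is the construction of a test function genuinely in $\mathrm{dom}(\mathfrak{h}^{\varphi\e,\eps})$ when the exact phase $\varphi\e$ only converges to $\varphi$; this is handled painlessly by the unit-modulus twist $\exp(i(\varphi\e-\varphi)x)$, after which all remaining steps reduce to localized versions of arguments already carried out in Section \ref{sec3}.
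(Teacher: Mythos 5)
Your proposal is correct and takes essentially the same route as the paper: the only (cosmetic) difference is that you enforce the $\varphi\e$-quasi-periodicity of the test function via the multiplicative twist $\exp\bigl(i(\varphi\e-\varphi)x\bigr)w_k(x)$, whereas the paper uses the affine correction $w_1(x)\bigl((\exp(i\varphi\e-i\varphi)-1)x+1\bigr)$. Both corrections converge to $w_k$ in $C^1[0,1]$, and the remaining steps are the localized, cell-wise versions of the Section \ref{sec3} argument exactly as you describe.
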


\begin{proof}
The proof is similar to the proof of Theorem \ref{th1}. 
The only essential difference is that the test-function $w=w\e(\x)$ defined by \eqref{we} have to be modified in order to meet $\varphi\e$-periodic boundary conditions. Namely, let $w_1\in C^\infty(0,1)$ satisfy $w_1(1)=\exp({i \varphi})w_1(0)$. We introduce   $w\e_1\in C^\infty(0,1)$ by  
\begin{gather}\label{bc}
w_1\e(x)=w_1(x)\left((\exp(i\varphi\e-i\varphi)-1)x+1\right).
\end{gather}
Clearly $w_1\e(x)$ satisfies $w_1\e(1)=\exp({i\varphi\e})w_1\e(0)$ and
\begin{gather}\label{appr1}
w_1\e\to w_1\text{ in }C^1(0,1)\text{ as }\eps\to 0.
\end{gather}
Finally, we define the function $w$ by formula \eqref{we} with $w\e_1(x)$ instead of $w_1(x)$. 
In view of \eqref{bc} $w\e\in\mathrm{dom}(\mathfrak{h}^{\varphi\e,\eps})$.
Then we plug the function $w\e$ into  the equality
\begin{gather*}
\intl_{\widetilde\Omega\e}\bigg(\nabla v\e_{f\e}\cdot\nabla w\e +\mu v\e_{f\e} w\e \bigg) \d \x = 
 \intl_{\widetilde\Omega\e} f\e w\e  \d \x
\end{gather*}
and pass to the limit as $\eps\to 0$. Using the same arguments as in the proof of Theorem \ref{th1} (with account of  \eqref{appr1}) we arrive at the statement of the lemma.
\end{proof}

We choose $f\e=(\lambda+\mu)u\e$. It is clear that in this case $v\e_{f\e}=u\e$. 
Due to \eqref{conv1+}-\eqref{conv2+} conditions \eqref{f+} hold true. Then by Lemma \ref{lm+} $u_1$ belongs to $H^2(0,1)$ and satisfies (additionally to \eqref{phi+})
\begin{gather}\label{u1cond}
u_1'(1)=\exp(i \varphi)  u'_1(0)
\\\label{u1cond+}
-u_1''+\mu\left(1+{q |B| \over q+\mu  }\right)u_1=(\lambda+\mu)u_1+{q|B|^{1/2} \over q+\mu  }(\lambda+\mu)u_2,\quad
u_2={q |B|^{1/2}  \over q +\mu   }u_1+ {1   \over q +\mu  }(\lambda+\mu)u_2,
\end{gather}
From \eqref{u1cond+}, via simple calculations, we obtain the following equation for $u_1$:
\begin{gather}\label{u1eq}
-u_1''=\rho(\lambda) u_1,\text{ where }\rho(\lambda)=\lambda\left(1+{q|B|\over q-\lambda }\right).
\end{gather}

Since $u_1\not=0$ \eqref{phi+}, \eqref{u1cond}, \eqref{u1eq} imply that
$\rho(\lambda)\in\sigma(-{\d^2\over \d x^2}|_{L_2(\mathbb{R})})=[0,\infty)$ or, equivalently, 
$\lambda\in[0,\infty)\setminus (q ,q+q|B| ).$ 
Then due to \eqref{sigmaH} $\lambda\in\sigma(\mathcal{H})$.\bigskip

Now, we inspect the case $$u_1=0.$$ We  show that in this case $\lambda=q $ and hence (see \eqref{sigmaH}) $\lambda\in\sigma(\mathcal{H})$.

Recall that $\lambda\e=\lambda^{\varphi\e,\eps}_{k\e}$, $u\e=u^{\varphi\e,\eps}_{k\e}$. 
We express the eigenfunction $u\e$ in the form
\begin{gather*}
u\e=v\e-w\e+\delta\e,
\end{gather*}
where 
\begin{gather*}
v\e(\x)=
\begin{cases}
0,&\x\in \widetilde\Pi\e,\\
{ \langle u\e\rangle_{B_j\e} }(h\e)^{-1} x_2,&\x\in T_j\e,\\
{ \langle u\e\rangle_{B_j\e} },&\x\in B_j\e,
\end{cases}\quad\qquad w\e(\x)=\suml_{k=1}^{k\e-1}(v\e,u^{\varphi\e,\eps}_k)_{L_2(\widetilde\Omega\e)} u^{\varphi\e,\eps}_k(\x)
\end{gather*}
and $\delta\e$ is a remainder term. 
It is clear that  
\begin{gather}\label{g_prop1}
v\e,w\e\in \mathrm{dom}(\mathfrak{h}^{\varphi\e,\eps})\text{\quad and\quad }
v\e-w\e\in \left(\mathrm{span}\left\{u_1^{\varphi\e,\eps},\dots,u_{k\e-1}^{\varphi\e,\eps}\right\}\right)^\perp. 
\end{gather}
 
\begin{lemma}\label{lm4}
One has for each $u\in H^1(T_j\e\cup Y_j\e)$:
\begin{gather}\label{est4}
\|u\|^2_{L_2( T_j\e)}\leq C  \left( (h\e)^2\|u\|^2_{L_2(Y_j\e)}+d\e|\ln d\e|h\e  \|\nabla u\|^2_{L_2(Y_j\e)}+  (h\e)^2\|\nabla u\|^2_{L_2(  T_j\e)}\right).
\end{gather}
\end{lemma}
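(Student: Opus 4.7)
The plan is to propagate $u$ from the bottom segment $S_j\e$ up through the passage $T_j\e$ via the fundamental theorem of calculus in $x_2$, and then control the resulting trace on $S_j\e$ by norms over the adjacent strip cell $Y_j\e$ using the already-established estimate \eqref{strong}.

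The first step is pointwise: for $\x=(x_1,x_2)\in T_j\e$ I write
$$u(x_1,x_2)=u(x_1,0)+\intl_0^{x_2}\partial_s u(x_1,s)\d s,$$
square, use $(a+b)^2\leq 2a^2+2b^2$, and apply Cauchy-Schwarz to the integral to get
$|u(x_1,x_2)|^2\leq 2|u(x_1,0)|^2+2h\e\intl_0^{h\e}|\partial_s u(x_1,s)|^2\d s$. Integration over $T_j\e$ (the $x_2$-integration over $(0,h\e)$ producing the factors $h\e$ and $(h\e)^2$) yields
$$\|u\|^2_{L_2(T_j\e)}\leq 2h\e\|u\|^2_{L_2(S_j\e)}+2(h\e)^2\|\partial_{x_2}u\|^2_{L_2(T_j\e)},$$
and the second term on the right already matches the last term of the claim since $|\partial_{x_2} u|\leq|\nabla u|$.

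For the boundary term I invoke \eqref{strong}: multiplying by $h\e$ gives
$$h\e\|u\|^2_{L_2(S_j\e)}\leq d\e h\e\eps^{-2}\|u\|^2_{L_2(Y_j\e)}+Cd\e h\e|\ln d\e|\,\|\nabla u\|^2_{L_2(Y_j\e)}.$$
The gradient summand is exactly the middle term in the target inequality. For the $L_2$ summand, the standing scaling hypothesis \eqref{ass3} with $q<\infty$ gives $d\e\leq C h\e\eps^2$ for all sufficiently small $\eps$, hence $d\e h\e\eps^{-2}\leq C(h\e)^2$, which produces the first term of the claim.

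The only non-routine point is precisely this last comparison: the natural prefactor arising from combining a vertical Poincar\'{e}-type step on $T_j\e$ with the trace-type estimate on $S_j\e$ is $d\e h\e\eps^{-2}$ rather than $(h\e)^2$, and one genuinely needs the geometric scaling $d\e=O(h\e\eps^2)$ underlying the finite-$q$ regime to rewrite it in the form stated. Once this is observed the proof consists of two elementary estimates assembled in sequence.
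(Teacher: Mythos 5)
Your proposal is correct and follows essentially the same route as the paper's own proof: the fundamental theorem of calculus in the $x_2$-direction on $T_j\e$, followed by the trace-type estimate \eqref{strong} on $S_j\e$ and the observation that $d\e\eps^{-2}=\mathcal{O}(h\e)$ in the finite-$q$ regime \eqref{ass3}. The only cosmetic difference is that the paper first reduces to smooth functions by density before applying the pointwise identity, which you may add as a one-line remark.
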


\begin{proof}
By the density arguments it is enough to prove the lemma only for smooth functions. Let $u$ be an arbitrary function from $C^1(\overline{T_j\e\cup Y_j\e})$.
Let $\x=(x_1,x_2)\in T_j\e$, $\mathbf{y}=(x_1,0)\in S_j\e$.
One has
\begin{gather*}
u(\x)=u(\mathbf{y})+\intl_{0}^{x_2}{\partial
u(\xi(\tau))\over\partial\tau}\d\tau,\text{ where }\xi(\tau)=(x_1,\tau),
\end{gather*}
whence, using \eqref{strong}, we obtain: 
\begin{multline}\label{tr8}\hspace{-3mm}
\|u\e\|^2_{L_2(T_j\e)}=
\intl_{0}^{h\e} \intl_{x_j\e-d\e/2}^{x_j\e+d\e/2} |u(x_1,x_2)|^2  \d x_1 \d x_2 \leq 2h\e \intl_{x_j\e-d\e/2}^{x_j\e+d\e/2} |u(x_1,0)|^2\d x_1 + 2 ( h\e)^2\intl_0^{h\e}\intl_{x_j\e-d\e/2}^{x_j\e+d\e/2}| \partial_{x_2} u(x_1,x_2)|^2\d x_1\d x_2\\\leq C h\e \left(d\e\eps^{-2}\|u\|_{L_2(Y_j\e)}^2+d\e |\ln d\e| \|\nabla u \|^2_{L_2(Y_j\e)}  \right)  + 2(h\e)^2 \|\nabla u\e\|^2_{L_2(T_j\e)}.
\end{multline} 
From \eqref{tr8}, taking into account that $d\e\eps^{-2}=\mathcal{O}(h\e)$ (see \eqref{ass3} for $q<\infty$), we arrive at \eqref{est4}.
\end{proof}

Estimate \eqref{est4} yields
\begin{gather}\label{uT}
\|u\e\|_{L_2\left(\cup_{j\in \mathcal{N}\e} T_j\e\right)}\to 0\text{ as }\eps\to 0.
\end{gather}
Also, one has the following Poincar\'e inequality:
\begin{gather}\label{uB}
\suml_{j\in\mathcal{N}\e}\|u\e-{ \langle u\e\rangle_{B_j\e} }\|^2_{L_2(B_j\e)}\leq C\eps^2
\|\nabla u\e \|^2_{L_2(B_j\e)}\to 0\text{ as }\eps\to 0.
\end{gather}
Since $u_1=0$, then, using \eqref{iso1} (evidently, it  holds with $\widetilde{\Pi}\e$ and $(0,1)$ instead of $\Pi\e$ and $\mathbb{R}$), we get
\begin{gather}
\label{uPi}
\|u\e\|^2_{L_2(\widetilde\Pi\e)}\leq \|J_1\e u\e\|^2_{L_2(0,1)}+C\eps^2 \|\nabla u\e\|^2_{L_2(\widetilde\Pi\e)}\to 0\text{ as }\eps\to 0.
\end{gather}

Since $\|u\e\|_{L_2(\widetilde{\Omega}\e)}=1$ then
\begin{gather*}
1=
\|u\e\|^2_{L_2(\widetilde{\Pi}\e)}
+\suml_{j\in\mathcal{N}\e}\|u\e\|_{L_2(T_j\e)}^2+ 
\suml_{j\in\mathcal{N}\e}|B_j\e|\left|\< u\e \> _{B_j\e}\right|^2+
\suml_{j\in\mathcal{N}\e}\|u\e-\<u\e\>_{B_j\e}\|^2_{L_2(B_j\e)},
\end{gather*}
and hence, in view of \eqref{uT}-\eqref{uPi}, we obtain
\begin{gather}\label{1}
\suml_{j\in\mathcal{N}\e}|B_j\e| \left|\<u\e\>_{B_j\e}\right|^2=1+o(1)\text{ as }\eps\to 0.
\end{gather}

From \eqref{1}, taking into account \eqref{ass2}, \eqref{ass3}, we obtain the asymptotics for $v\e$:
\begin{gather}\label{v1}
\|\nabla v\e\|^2_{L_2(\Omega\e)}=
\suml_{j\in\mathcal{N}\e}{d\e}(h\e)^{-1}\left|\<u\e\>_{B_j\e}\right|^2=q  + o(1)\text{ as }\eps\to 0,\\\label{v2}
\suml_{j\in\mathcal{N}\e}\|v\e\|^2_{L_2(B_j\e)}= \suml_{j\in\mathcal{N}\e}|B_j\e|\left| \<u\e\>_{B_j\e}\right|^2 =1+o(1)\text{ as }\eps\to 0,\\\label{v3}
\suml_{j\in\mathcal{N}\e}\|v\e\|^2_{L_2(T_j\e)}={1\over 3|B|}{(h\e)^2 }{d\e\over h\e\eps^2}\suml_{j\in\mathcal{N}\e}|B_j\e| \left| \<u\e\>_{B_j\e}\right|^2
=o(1)\text{ as }\eps\to 0.
\end{gather}
Asymptotics \eqref{v2}-\eqref{v3} together with $v\e|_{\widetilde{\Pi}\e}=0$ yield
\begin{gather}\label{v4}
\|v\e\|_{L_2(\widetilde{\Omega}\e)}=1+o(1)\text{ as }\eps\to 0.
\end{gather}

By virtue of \eqref{uT}-\eqref{uPi} and \eqref{v3} 
\begin{gather}\label{u-v}
\|u\e-v\e\|^2_{L_2(\widetilde{\Omega}\e)}=
\suml_{j\in\mathcal{N}\e}\left(\|u\e-\<u\e\>_{B_j\e}\|^2 _{L_2(B_j\e)}+
\|u\e-v\e\|^2_{L_2(  T_j\e)}\right)+
\|u\e\|^2_{L_2(\widetilde{\Pi}\e)}
\underset{\eps\to 0}\to 0.
\end{gather}

Since $(u\e,u_k^{\varphi\e,\eps})_{L_2(\widetilde{\Omega}\e)}=0$ for $k=1,\dots, k\e-1$, we get, using the Bessel inequality:
\begin{gather*}
\|w\e\|^2_{L_2(\widetilde{\Omega}\e)}=\suml_{k=1}^{k\e-1}\left|(v\e,u_k^{\varphi\e,\eps})_{L_2(\widetilde{\Omega}\e)}\right|^2=
\suml_{k=1}^{k\e-1}\left|(v\e-u\e,u_k^{\varphi\e,\eps})_{L_2(\widetilde{\Omega}\e)}\right|^2\leq \|v\e-u\e\|^2_{L_2(\Omega\e)},\\
\|\nabla w\e\|^2_{L_2(\widetilde{\Omega}\e)}=\suml_{k=1}^{k\e-1}\lambda_k^{\varphi\e,\eps}\left|(v\e, u_k^{\varphi\e,\eps})_{L_2(\widetilde{\Omega}\e)}\right|^2=
\suml_{k=1}^{k\e-1}\lambda_k^{\varphi\e,\eps}\left|(v\e-u\e,u_k^{\varphi\e,\eps})_{L_2(\widetilde{\Omega}\e)}\right|^2\leq \lambda\e\|v\e-u\e\|^2_{L_2(\widetilde{\Omega}\e)},
\end{gather*}
whence, in view of \eqref{u-v},
\begin{gather}\label{g_est}
\|w\e\|^2_{H^1(\widetilde{\Omega}\e)}\to 0\text{ as }\eps\to 0.
\end{gather}

Now, we are in position to estimate the remainder $\delta\e$. 
One has the following variational characterization for $\lambda\e$ (see, e.g., \citep{RS72}):
\begin{gather}\label{minmax_p}
\lambda\e=\inf\left\{{\|\nabla u\|^2_{L_2(\widetilde{\Omega}\e)}\over \| u\|^2_{L_2(\widetilde{\Omega}\e)}},\ 0\not=u\in \mathrm{dom}(\mathfrak{h}^{\phi\e,\eps})\cap \left(\mathrm{span}\left\{u_1^{\varphi\e,\eps},\dots,u_{k\e-1}^{\varphi\e,\eps}\right\}\right)^\perp \right\}.
\end{gather}
From  \eqref{minmax_p} we get, taking into account  \eqref{g_prop1}:
\begin{gather}\label{star}
\lambda\e=\|\nabla u\e\|^2_{L_2(\widetilde{\Omega}\e)}\leq {\|\nabla \tilde v\e\|^2_{L_2(\widetilde{\Omega}\e)}\over \|\tilde v\e\|^2_{L_2(\widetilde{\Omega}\e)}},\text{ where }\tilde v\e=v\e-w\e.
\end{gather}
 Inequality \eqref{star} is equivalent to
\begin{gather}\label{minmax}
\|\nabla \delta\e\|^2_{L_2(\widetilde{\Omega}\e)}\leq {\|\nabla \tilde v\e\|^2_{L_2(\widetilde{\Omega}\e)}\left(\|\tilde v\e\|^{-2}_{L_2(\widetilde{\Omega}\e)}-1\right)-2(\nabla \tilde v\e,\nabla \delta\e)_{L_2(\widetilde{\Omega}\e)}}.
\end{gather}

Due to \eqref{v1}, \eqref{v4}, \eqref{g_est}
\begin{gather}\label{minmax1}
{\|\nabla \tilde v\e\|^2_{L_2(\widetilde{\Omega}\e)}}\left(\|\tilde v\e\|^{-2}_{L_2(\widetilde{\Omega}\e)}-1\right)\to 0\text{ as }\eps\to 0.
\end{gather}

Now, let us estimate the last term in the right-hand-side of \eqref{minmax}. One has
\begin{multline}\label{minmax2}
(\nabla \tilde v\e,\nabla \delta\e)_{L_2(\widetilde{\Omega}\e)}=(\nabla v\e,\nabla \delta\e)_{L_2(\widetilde{\Omega}\e)}-(\nabla w\e,\nabla \delta\e)_{L_2(\widetilde{\Omega}\e)}\\=(\nabla v\e,\nabla u\e-\nabla v\e)_{L_2(\widetilde{\Omega}\e)}+(\nabla v\e,\nabla w\e)_{L_2(\widetilde{\Omega}\e)}-(\nabla w\e,\nabla \delta\e)_{L_2(\widetilde{\Omega}\e)}.
\end{multline}

Integrating by parts and taking into account that $\Delta v_j\e=0$ in $T_j\e$ we get
\begin{gather*}
(\nabla v\e,\nabla u\e-\nabla v\e)_{L_2(\widetilde{\Omega}\e)}=\suml_{j\in \mathcal{N}\e}\intl_{T_j\e} \nabla v\e\cdot\nabla (u\e-v\e)\d \x
={d\e\over h\e}\suml_{j\in \mathcal{N}\e} \langle u\e \rangle_{B_j^{\eps}} \left(-\langle u\e \rangle_{S_j\e}+\langle u\e \rangle_{C_j\e}-\langle u\e \rangle_{B_j^{\eps}}\right).
\end{gather*}
Using   \eqref{ass1}, \eqref{ass3}, \eqref{lm2est1}, \eqref{lm2est2}, \eqref{uPi}, \eqref{1} we obtain
\begin{multline}\label{delta2}
\left|(\nabla v\e,\nabla u\e-\nabla v\e)_{L_2(\widetilde{\Omega}\e)}\right|^2\leq  
\left({d\e\over h\e}\right)^2 |B_j\e|^{-1}
\left\{\suml_{j\in \mathcal{N}\e}|B_j\e|\left|\langle u\e \rangle_{B_j^{\eps}}\right|^2 \right\}
\left\{\suml_{j\in\mathcal{N}\e}\left|\langle u\e \rangle_{S_j\e}+\langle u\e \rangle_{C_j\e}-\langle u\e \rangle_{B_j^{\eps}}\right|^2 \right\}\\\leq 
C_1 \left({d\e\over h\e \eps^2}\right)^2 \eps^2|B_j\e|^{-1}
\suml_{j\in\mathcal{N}\e}\left(\eps^2\left|\langle u\e \rangle_{Y_j^{\eps}}\right|^2 +\eps^2\left|\langle u\e \rangle_{S_j\e}-\langle u\e\rangle_{Y_j\e}\right|^2 +\eps^2\left|\langle  u\e \rangle_{C_j\e}-\langle u\e \rangle_{B_j^{\eps}}\right|^2 \right)\\\leq C_2\left(\|u\e\|_{L_2(\widetilde{\Pi}\e)}^2+ \eps^2|\ln d\e|\|\nabla u\e\|_{L_2(\cupl_{j\in\mathcal{N}\e}Y_j\e)}^2+ \eps^2|\ln d\e|\|\nabla u\e\|_{L_2(\cupl_{j\in\mathcal{N}\e} B_j\e)}^2\right)\to 0\text{ as }\eps\to 0.
\end{multline}
Also, by virtue of \eqref{v1}, \eqref{g_est}, 
\begin{gather}\label{minmax4}
(\nabla v\e,\nabla w\e)_{L_2(\widetilde{\Omega}\e)}\to 0\text{ as }\eps\to 0,
\\\label{minmax5}
\left|(\nabla w\e,\nabla \delta\e)_{L_2(\widetilde{\Omega}\e)}\right|\leq \left|(\nabla w\e,\nabla u\e)_{L_2(\widetilde{\Omega}\e)}\right|+\left|(\nabla w\e,\nabla v\e)_{L_2(\widetilde{\Omega}\e)}\right|+\|\nabla w\e\|^2_{L_2(\widetilde{\Omega}\e)}\to 0\text{ as }\eps\to 0.
\end{gather}
From \eqref{minmax2}-\eqref{minmax5} we get
\begin{gather}\label{delta3}
\liml_{\eps\to 0}(\nabla \tilde v\e,\nabla \delta\e)_{L_2(\widetilde{\Omega}\e)}=0.
\end{gather}

Finally, combining \eqref{minmax}, \eqref{minmax1} and \eqref{delta3} we conclude that
\begin{gather}\label{delta4}
\liml_{\eps\to 0}\|\nabla \delta\e\|_{L_2(\widetilde{\Omega}\e)}=0,
\end{gather}
whence, taking into account \eqref{v1},  \eqref{g_est}, 
\begin{gather*}
\lambda\e = \|\nabla u\e\|^2_{L_2(\widetilde{\Omega}\e)}\sim  \|\nabla v\e\|^2_{L_2(\widetilde{\Omega}\e)}\sim q \text{ as }\eps\to 0.\text{ Q.E.D.}
\end{gather*}

Property (i) is completely proved.

\subsection{Proof of the property (ii)}

Let $\lambda\in \sigma(\mathcal{H})$; we have to show that there exists a family $\left\{\lambda^{\eps}\in\sigma(\mathcal{H}\e)\right\}_{\eps}$ such that   $\liml_{\eps\to 0}\lambda\e=\lambda$.

Let us assume the opposite. Then there exist a
subsequence $\eps_k$, $\eps_k\searrow 0$ and
 $\delta>0$ such that
\begin{gather}
\label{dist} (\lambda-\delta,\lambda+\delta)\cap\sigma(\mathcal{H}^{\eps})=\varnothing\text{ as }\eps=\eps_k.
\end{gather}

Since $\lambda\in\sigma(\mathcal{H})$ there exists
$F=(f_1,f_2) \in [L_2(\mathbb{R})]^2$, such that  
\begin{gather}\label{notinim}
F\notin\mathrm{range}(\mathcal{H} -\lambda {I}).
\end{gather}

Due to \eqref{dist} $\lambda$ is not in the spectrum of $\mathcal{H}\e$ as $\eps=\eps_k$ and therefore for an arbitrary $f\e\in L_2(\Omega\e)$  there exists the unique solution $u\e$ of the problem
\begin{gather}\label{problem}
\mathcal{H}\e u\e-\lambda u\e=f\e,\ \eps=\eps_k.
\end{gather}
Moreover the following estimates hold true:
\begin{gather}\label{uL2+uH1+}
\|u\e\|_{L_2(\Omega\e)}\leq {1\over \delta}\|f\e\|_{L_2(\Omega\e)},\quad 
\|\nabla u\e\|_{L_2(\Omega\e)}^2= \lambda\|u\e\|^2_{L_2(\Omega\e)}+(f\e,u\e)_{L_2(\Omega\e)}\leq \left({\lambda\over\delta^2}+{1\over \delta}\right)\|f\e\|_{L_2(\Omega\e)}^2.
\end{gather}

Now, we choose   $f\e $ in \eqref{problem} by \eqref{J*}. The family 
$\{f\e\}_\eps$ satisfies \eqref{f}, whence, taking into account \eqref{uL2+uH1+}, we conclude that there exist a subsequence (still indexed by $\eps_k$) and $u_1\in H^1(\mathbb{R})$, $u_2\in L_2(\mathbb{R})$ such that
\eqref{conv12} hold (as $\eps=\eps_k\to 0$).

Repeating word-by-word the proof of Theorem \ref{th1} we conclude that
$U=(u_1,u_2)$ solves \eqref{notinim}. We obtain a contradiction. 
Property (ii) is   proved and this finishes the proof of Theorem \ref{th2}.

\section{Proof of Lemma \ref{lm1} \label{sec6}}

In the proof we   deal with domains $B_0\e,  T_0\e, Y_0\e$.
For convenience hereinafter  we omit the  index ``$0$''.
We denote
$$G\e=Y\e\cup T\e,\quad D\e=B\e\cup G\e.$$
The set $D\e$ is the smallest period cell for the operators $\mathcal{H}\e$. 
Let $\varphi\in \mathbb{R}\backslash(2\pi\mathbb{Z})$. In $L_2(D\e)$ we introduce the sesquilinear form ${\mathfrak{a}}^{\varphi,\eps}$ by
\begin{gather}\label{etaform}\hspace{-5mm}
\mathfrak{a}^{\varphi,\eps}[u,v]=\intl_{D\e}\nabla u\cdot\overline{\nabla v} \d \x,\ 
\mathrm{dom}(\mathfrak{a}^{\varphi,\eps})=\left\{u\in H^1(D\e): u(\eps,x_2)=\exp(i\varphi)u(0,x_2)\text{ for }x_2\in (-\eps,0)\right\}.
\end{gather}
We denote by $\mathcal{A}^{\varphi,\eps}$ the operator  associated with this form, by $\left\{\mu_{k}^{\varphi,\eps}\right\}_{k\in\mathbb{N}}$ we denote the sequence of its eigenvalues  arranged in the ascending order and repeated according to  their multiplicity. 

Again using Floquet-Bloch theory we get the representation
\begin{gather}\label{Floquet+}
\sigma(\mathcal{H}\e)=\cupl_{k\in\mathbb{N}}\left\{\mu_{k}^{\varphi,\eps}:\ \varphi\in \mathbb{R}\backslash(2\pi\mathbb{Z})\right\}.
\end{gather}
The sets $\left\{\mu_{k}^{\varphi,\eps}:\ \varphi\in \mathbb{R}\backslash(2\pi\mathbb{Z})\right\}$ are compact intervals. 
Our goal is to prove that
\begin{gather}\label{infty}
\mu_{2}^{\varphi,\eps}\to \infty\text{ as }\eps\to 0\text{ provided }\varphi\not=0,
\end{gather}
then  Lemma \ref{lm1} follows from directly from \eqref{Floquet+}-\eqref{infty}.

To prove \eqref{infty} we consider the following operator in $L_2(D\e)=L_2(B\e)\oplus L_2(G\e)$:
$$\widehat{\mathcal{A}}^{\varphi,\eps}= \left(-\Delta_{B\e}\right)\oplus \left(-\Delta^{\varphi}_{G\e}\right).$$
Here $-\Delta_{B\e} $ is the Neumann Laplacian on $B\e$, 
and  $-\Delta^{\varphi}_{G\e}$ is the operator acting in $L_2(G\e)$ 
being associated with the sesqulilinear form, which is defined by \eqref{etaform} with $G\e$ instead of $D\e$.
We denote  by  $\left\{\widehat{\mu}_{k}^{\varphi,\eps}\right\}_{k\in\mathbb{N}}$  the sequence of eigenvalues of $\widehat{\mathcal{A}}^{\varphi,\eps}$.
It is easy to see that $\mathrm{dom}(\widehat{\mathcal{A}}^{\varphi,\eps})\supset \mathrm{dom}({\mathcal{A}}^{\varphi,\eps})$ and $\widehat{\mathcal{A}}^{\varphi,\eps}={\mathcal{A}}^{\varphi,\eps}$ on $\mathrm{dom}({\mathcal{A}}^{\varphi,\eps})$.
Then, by the min-max principle,
\begin{gather}\label{minmax_lm}
\forall k\in\mathbb{N}:\quad\widehat{\mu}_{k}^{\varphi,\eps}\leq {\mu}_{k}^{\varphi,\eps}.
\end{gather}

The first eigenvalue of $-\Delta _{B\e}$ is equal to zero, therefore $\widehat{\mu}_{1}^{\varphi,\eps}=0$.
Let us prove that the first eigenvalue of $-\Delta^{\varphi}_{G\e}$ tends to 
infinity. For an arbitrary $u\in H^1(Y\e)$ one has, using the Poincar\'{e} inequality:
\begin{gather}\label{estim1+}
\|u\|^2_{L_2(Y\e)}=\|u-\< u\>_{Y\e}\|^2_{L_2(Y\e)}+\eps^2 |\< u\>_{Y\e}|^2
\leq C\eps^2\|\nabla u\|^2_{L_2(Y\e)}+\eps^2 |\< u\>_{Y\e}|^2.
\end{gather}

We denote:
$$Z_0\e= \{\x\in\mathbb{R}^2:\ x_1=0,\ x_2\in (-\eps,0)\},\quad 
Z_1\e= \{\x\in\mathbb{R}^2:\ x_1=\eps,\ x_2\in (-\eps,0)\}.$$
Employing  the trace  and the Poincar\'{e} inequalities  one has
\begin{multline}\label{estim1++}
k=0,1:\quad \left|\< u \>_{Z_k\e} -  \< u\>_{Y\e}\right|^2=
\left|\left\< u - \< u\>_{Y\e}\right\>_{Z_k\e}\right|^2\leq
\eps^{-1}\left\|u  - \< u\>_{Y\e}\right\|^2_{L_2(Z_k\e)}\\\leq 
C\left(\eps^{-2}\left\|u - \< u \>_{Y\e}\right\|^2_{L_2(Y\e)}+  \left\|\nabla u  \right\|^2_{L_2(Y\e)}\right)\leq C_1 \left\|\nabla u  \right\|^2_{L_2(Y\e)}.
\end{multline}

Now, suppose that $u$ is not only in $H^1(G\e)$, but also
 $u(\eps,x_2)=\exp(i\varphi)u(x_2)\text{ for }x_2\in (-\eps,0)$.
Then it follows from \eqref{estim1++} that (recall: $\varphi\not=0$, whence $1-\exp(i\varphi)\not=0$)
\begin{multline}\label{estim2+}\left|\< u \>_{Y\e}\right|^2=|1-\exp(i\varphi)|^{-2}
\left|\< u\>_{Y\e}-\< u\>_{Z_1\e}+
\exp(i\varphi)\< u\>_{Z\e_0}- \exp(i\varphi)\< u\>_{Y\e} \right|^2\\\leq
2|1-\exp(i\varphi)|^{-2}\left(
\left|\< u\>_{Y\e}-\< u\>_{Z_1\e}\right|^2+
\left|\< u\>_{Z\e_0}-\< u\>_{Y\e}\right|^2
\right)\leq C\|\nabla u\|^2_{L_2(Y\e)}.
\end{multline}
Combining \eqref{estim1+} and \eqref{estim2+} we arrive at
\begin{gather}\label{estim3+}
\|u\|^2_{L_2(Y\e)}\leq C\eps^2 \|\nabla u\|^2_{L_2(Y\e)}.
\end{gather}
Then, using Lemma \ref{lm4} and inequality \eqref{estim3+}, we obtain the estimate
\begin{gather}\label{estim4+}
\|u\|^2_{L_2(T\e)}\leq 
 C  \left( (h\e \eps)^2 \|\nabla u\|^2_{L_2(Y\e)} +d\e|\ln d\e|h\e  \|\nabla u\|^2_{L_2(Y\e)}+  (h\e)^2\|\nabla u\|^2_{L_2 (  T_j\e )}\right).
\end{gather}

It follows from \eqref{estim3+}-\eqref{estim4+} that  for each $u\in \left\{v\in H^1(G\e):\  v(\eps,x_2)=\exp(i\varphi)v(x_2)\text{ for }x_2\in (-\eps,0)\right\}$
\begin{gather}\label{estim5+}
\|u\|^2_{L_2(G\e)}\leq 
 C\eta\e \|\nabla u\|^2_{L_2(G\e)},\text{ where }\eta\e\to 0\text{ as }\eps\to 0.
\end{gather}
Inequality \eqref{estim5+} implies that the first eigenvalue of  the operator $-\Delta^{\varphi}_{L_2(G\e)}$ tends to infinity as $\eps\to 0$. Evidently, the second eigenvalue of $-\Delta^{\varphi}_{B\e}$ also tends to infinity.
Therefore
\begin{gather*}
\widehat{\mu}_{2}^{\varphi,\eps}\to \infty\text{ as }\eps\to 0\text{ provided }\varphi\not=0,
\end{gather*}
whence, using \eqref{minmax_lm}, we infer \eqref{infty}. 
Lemma \ref{lm1} is proved.

\section*{Acknowledgements}

A.K. gratefully acknowledges financial support by the Deutsche
Forschungsgemeinschaft (DFG) through CRC 1173 ``Wave phenomena: analysis and numerics''  
and the project ``Analisi asintotica di problemi spettrali in domini perturbati'' GNAMPA (Gruppo Nazionale per l'Analisi Matematica, la Probabilit\`a e le loro Applicazioni) of INDAM (Istituto Nazionale Di Alta Matematica). 
G.C. is a member of GNAMPA (INDAM).


\begin{thebibliography}{99}



\bibitem{AHH} J.M.~Arrieta, J.K.~Hale, Q.~Han, Eigenvalue problems for non-smoothly perturbed domains, J. Differ. Equations 91 (1991) 24--52.

\bibitem{AP11} J.M.~Arrieta, M.C.~Pereira,  Homogenization in a thin domain with an oscillatory boundary, J. Math. Pures Appl. 96 (2011) 29--57.

\bibitem{AP13} J.M.~Arrieta, M.C.~Pereira, The Neumann problem in thin domains with very highly oscillatory boundaries, J. Math. Anal. Appl. 404 (2013) 86--104. 

\bibitem{BNR13} F.L.~Bakharev, S.A.~Nazarov, K.M.~Ruotsalainen, A gap in the spectrum of the Neumann-Laplacian on a periodic waveguide, Appl. Analysis 92 (2013) 1889--1915.

\bibitem{BN15}
F.L.~Bakharev, S.A.~Nazarov,  
Gaps in the spectrum of a waveguide composed of domains with different limiting dimensions, Sib. Math. J. 56 (2015) 575--592 (2015).

\bibitem{BRT15}
F.L.~Bakharev, K.~Ruotsalainen, J.~Taskinen, 
Spectral gaps for the linear surface wave model in periodic channels,
Quart. J. Mech. Appl. Math. 67 (2014) 343--362. 

\bibitem{BP13_1} D.I.~Borisov, K.V.~Pankrashkin, Gap opening and split band edges in waveguides coupled by a periodic system of small windows, Math. Notes 93 (2013) 660--675.

\bibitem{BP13_2} D.~Borisov, K.~Pankrashkin, Quantum waveguides with small periodic perturbations: gaps and edges of Brillouin zones, J. Phys. A: Math. Theor. 46 (2013) 235203.

\bibitem{B15} D.I.~Borisov,  
On the band spectrum of a Schrödinger operator in a periodic system of domains coupled by small windows,
Russ. J. Math. Phys. 22 (2015) 153--160. 


\bibitem{CH53} R.~Courant, D.~Hilbert, Methods of Mathematical Physics, Wiley-Interscience, New York, 1953.

\bibitem{CK15}
G.~Cardone, A.~Khrabustovskyi,
Neumann spectral problem in a domain with very corrugated boundary,
J. Differ. Equations 259 (2015) 2333--2367.

\bibitem{CK16}
G.~Cardone, A.~Khrabustovskyi,
Example of periodic Neumann waveguide with gap in spectrum.
to appear in: J. Dittrich, H. Kova\v{r}ik, A. Laptev (Eds.), Functional Analysis and Operator Theory for Quantum Physics. A Festschrift in Honor of Pavel Exner, EMS Publ. House, 2016;  	arXiv:1605.07812.

\bibitem{CMN09} G.~Cardone, V.~Minutolo, S.~Nazarov, Gaps in the essential spectrum of periodic elastic waveguides, Z. Angew. Math. Mech. 89 (2009) 729--741.

\bibitem{CNP10} G.~Cardone, S.~Nazarov, C.~Perugia, A gap in the essential spectrum of a cylindrical waveguide with a periodic perturbation of the surface, Math. Nachr. 283 (2010) 1222--1244.


\bibitem{DLPSW11} W.~D\"{o}rfler, A.~Lechleiter, M.~Plum, G.~Schneider,
C.~Wieners, Photonic Crystals. Mathematical Analysis and Numerical
Approximation, Springer, Berlin, 2011.
 
\bibitem{FK96} A.~Figotin, P.~Kuchment, Band-gap structure of the spectrum
of periodic dielectric and acoustic media. I. Scalar model, SIAM
J. Appl. Math. 56 (1996), 68--88; II. Two-dimensional
photonic crystals, ibid., 1561--1620. 


\bibitem{FS08} L.~Friedlander, M.~Solomyak, On the spectrum of narrow periodic waveguides, Russian J. Math. Phys. 15 (2008) 238--242.

\bibitem{HR92}
J.K.~Hale, G.~Raugel, Reaction-diffusion equation on thin domains, J. Math. Pures Appl. 71 (1992) 33--95.

\bibitem{KP16}
A.~Khrabustovskyi, M.~Plum, Spectral properties of elliptic operators with double-contrast coefficients near a hyperplane, Asymptot. Analysis 98 (2016), 91--130.

\bibitem{K93} P.~Kuchment, Floquet Theory For
Partial Differential Equations, Birkhauser, Basel, 1993.

\bibitem{K01} P.~Kuchment, The mathematics of photonic crystals. In: {Mathematical Modeling in Optical Science}, Frontiers in Applied
Mathematics 22, Philadelphia, PA: SIAM, 2001, Chap. 7, pp.
207--272.

\bibitem{MP10} T.A.~Mel'nyk, A.V.~Popov, 
Asymptotic analysis of boundary value problems in thin perforated domains with rapidly changing thickness, Nonlinear Oscil. 13 (2010) 57--84.

\bibitem{MP12} T.A.~Mel'nyk, A.V.~Popov,  Asymptotic analysis of boundary value and spectral problems in thin perforated domains with rapidly changing thickness and different limiting dimensions,  Sb. Math. 203 (2012) 1169--1195

\bibitem{NRT10}
S.A.~Nazarov, K.~Ruotsalainen, J.~Taskinen, Essential spectrum of a periodic elastic waveguide may contain arbitrarily many gaps, Appl. Anal. 89 (2010)  109--124.

\bibitem{N10_1} S.A.~Nazarov, Opening of a gap in the continuous spectrum of a periodically perturbed waveguide, Math. Notes 87 (2010) 738--756.

\bibitem{N10_2} S.A.~Nazarov, An example of multiple gaps in the spectrum of a periodic waveguide,
Sb. Math. 201(4) (2010) 569--594.


\bibitem{N12} S.A.~Nazarov, The asymptotic analysis of gaps in the spectrum of a waveguide perturbed with a periodic family of small voids, J. Math. Sci. 186 (2012) 247--301.

\bibitem{N15} S.A.~Nazarov, 
Gap opening around a given point of the spectrum of a cylindrical waveguide by means of a gentle periodic perturbation of walls,
J. Math. Sci. 206 (2015)  288--314.


\bibitem{P10} K.~Pankrashkin, On the spectrum of a waveguide with periodic cracks, J. Phys. A: Math. Theor.  43
(2010), article ID: 474030.

\bibitem{RS72} M.~Reed, B.~Simon, Methods of Modern Mathematical Physics. I. 
Funtional analysis, Academic Press, New York - San Francisco - London, 1972; IV. Analysis of Operators, ibid, 1979.

\bibitem{RT75} J.~Rauch, M.~Taylor, 
Potential and scattering theory on wildly perturbed domains,
J. Funct. Anal. 18 (1975) 27--59.

\bibitem{PS13}
M.C.~Pereira, R.P.~Silva, 
Error estimates for a Neumann problem in highly oscillating thin domains,
Discrete Contin. Dyn. Syst. 33 (2013) 803--817.

\bibitem{PS15}
M.C.~Pereira, R.P.~Silva,  Correctors for the Neumann problem in thin domains with locally periodic oscillatory structure, Quart. Appl. Math. 73 (2015) 537--552. 

\bibitem{SP80} E.~S\'{a}nchez-Palencia, Nonhomogeneous Media and Vibration 
Theory, Springer-Verlag, Berlin - New York, 1980.

\bibitem{Y98} K.~Yoshitomi, Band gap of the spectrum in periodically curved quantum waveguides,
J. Differ. Equations 142 (1998) 123--166.

\end{thebibliography}
\end{document}